\documentclass[12pt]{article}
\usepackage{amssymb}
\usepackage{amsmath,amsthm}
\usepackage[latin1]{inputenc}
\usepackage{hyperref}
\usepackage{color}
\usepackage{enumerate}
\usepackage{graphicx}
\DeclareGraphicsRule{.JPG}{eps}{*}{`jpeg2ps #1}



 \setlength{\parindent}{0.3in}
 \newtheorem{remark}{Remark}

 \newtheorem{lemma}[remark]{Lemma}
 \newtheorem{theorem}[remark]{Theorem}
 \newtheorem{proposition}[remark]{Proposition}
 \newtheorem{corollary}[remark]{Corollary}

 \newtheorem{conjecture}[remark]{Conjecture}

\addtolength{\hoffset}{-1.7cm}
\addtolength{\textwidth}{3.8cm}
\addtolength{\voffset}{-2cm} \addtolength{\textheight}{3cm}

\title{Computing global offensive alliances in Cartesian product graphs}

\author{ Ismael G. Yero$^{1}$ and Juan A.
Rodr\'{\i}guez-Vel\'{a}zquez$^{2}$\\
    \\
$^1${\small Departamento de Matem\'aticas, Escuela Polit\'ecnica Superior de Algeciras}\\
{\small Universidad de C\'adiz,} {\small
Av. Ram\'on Puyol s/n, 11202 Algeciras, Spain.} \\ {\small
ismael.gonzalez\@@uca.es}\\
$^2${\small Departament d'Enginyeria Inform\`atica i Matem\`atiques}\\
{\small Universitat Rovira i Virgili,}  {\small Av. Pa\"{\i}sos
Catalans 26, 43007 Tarragona, Spain.} \\{\small
juanalberto.rodriguez\@@urv.cat}
\\
}

\begin{document}

\maketitle

\begin{abstract}
A global offensive alliance in a graph $G$ is a set $S$ of vertices with
the property that every vertex not belonging to $S$ has at least one more
neighbor in $S$ than it has outside of $S$. The global offensive alliance number  of $G$, $\gamma_o(G)$, is the minimum cardinality of a global offensive alliance in $G$. A set $S$ of vertices of a graph $G$ is a dominating set for $G$ if
every vertex not belonging to $S$ has at least one neighbor in $S$. The domination number  of $G$, $\gamma(G)$, is the minimum cardinality of a dominating set of $G$.    In this work  we obtain closed formulas for the global offensive alliance number of several families of Cartesian product graphs, we also prove that $\gamma_o(G\square H)\ge \frac{\gamma(G)\gamma_o(H)}{2}$ for any graphs $G$ and $H$ and we show that if $G$ has an efficient dominating set, then $\gamma_o(G\square H)\ge \gamma(G)\gamma_o(H).$ Moreover, we present a Vizing-like conjecture for the global offensive alliance number and we prove it for several families of graphs.
\end{abstract}

{\it Keywords:} Global offensive alliances; domination;  Cartesian product graphs.

{\it AMS Subject Classification Numbers:}   05C69; 05C70; 05C76.

\section{Introduction}



Alliances in graphs were described first  by Kristiansen {\em et al } in \cite{alliancesOne}, where  alliances were classified into defensive, offensive or powerful. After this seminal paper,  the issue has been studied intensively. Remarkable examples are the articles \cite{kdaf,kdaf1}, where alliances were generalized to $k$-alliances, and \cite{fava}, where the authors presented the first results on offensive alliances. One of the main motivations of this study  is based on the NP-completeness of computing minimum cardinality of (defensive, offensive, powerful) alliances in graphs.

On the other hand,  several graphs may be constructible from smaller and simpler components by basic operations like unions, joins, compositions, or multiplications with respect to various products, where properties
of the constituents determine the properties of the composite graph. It is therefore desirable to reduce the problem of computing the
graph parameters (alliance numbers, for instance) of product graphs, to the problem of
computing some parameters of the factor graphs.


Nowadays the study
of the behavior of several graph parameters in product graphs has become an interesting topic
of research \cite{Imrich,product2}. For instance, we emphasize the Shannon capacity of a graph \cite{shanon}, which is
a certain limiting value involving the vertex independence number of strong product powers of a
graph, and Hedetniemi's coloring conjecture for the categorical product \cite{hedet,product2}, which states
that the chromatic number of any categorial product graph is equal to the minimum value between
the chromatic numbers of its factors. Also, one of the oldest open problems on domination
in graphs is related to Cartesian product graphs. The problem was presented first by Vizing
in 1963 \cite{vizing1,vizing2}. Vizing's conjecture states that the domination number of any Cartesian product
graph is greater than or equal to the product of the domination numbers of its factors.

Cartesian product graphs have been much studied
in graph theory. Interest in Cartesian product graphs has been increased by the  advent of massively parallel computers whose structure is that of the Cartesian product graphs \cite{Ha,Ka}. This not only provides potential
applications for the existing theory, but also suggests some new aspects of these graphs that deserve study.

 Studies on defensive alliances in graphs product were initiated in
\cite{cota}, for the case of the torus graph $C_s\square C_t$, and studied further in \cite{bermudo,yero1,yero2}. Nevertheless the great part of the results in these works are upper bounds on the alliance numbers of Cartesian product graphs. In the present work we obtain closed formulas for the global offensive alliance number of several families of Cartesian product graphs, we obtain new formulas relating the global offensive alliance number of Cartesian product graphs with the domination number and the global offensive alliance number of its factors, we present a Vizing-like conjecture for the global offensive alliance number and we prove it for several families of graphs.

We begin by stating the terminology
used. Throughout this article, $G = (V,E)$ denotes a simple
graph of order $|V | = n$. We denote two adjacent vertices $u$ and
$v$ by $u \sim v$. Given a vertex $v\in V,$ the set $N(v)=\{u\in V: \; u\sim v\}$ is the open
neighborhood of $v$, and  the set  $N[v]=N(v)\cup \{v\}$ is the closed
neighborhood of $v$. So, the  degree of
a vertex $v \in V$ is  $d(v)=|N(v)|$.

For a nonempty set $S \subseteq V$, and a vertex
$v \in V$, $N_S(v)$ denotes the set of neighbors $v$ has in $S$, i.e.,
$N_S(v) = S\cap N(v)$. The degree of $v$ in $S$
will be denoted by $\delta_S(v) = |N_S(v)|$.  The complement of a
set $S$ in $V$ is denoted by $\overline{S}$.

A set $S \subseteq V$ is a \textit{dominating set} in $G$ if
for every vertex $v\in \overline{S}$,  $\delta_S(v)> 0$ (every
vertex in $\overline{S}$ is adjacent to at least one vertex in $S$).
The \textit{domination number} of $G$, denoted by
$\gamma(G)$, is the minimum cardinality of a dominating set in
$G$ \cite{bookdom1}. An \emph{efficient dominating set}  is a dominating set  $S=\{u_1,u_2,...,u_{{\gamma(G)}}\}$
such that $N[u_i]\cap N[u_j]=\emptyset$, for every $i,j\in
\{1,...,\gamma(G)\}$, $i\ne j$.   Examples of graphs having an efficient dominating set are the path graphs $P_n$, the cycle  graphs $C_{3k}$ and the cube graph $Q_3$.

A nonempty set $S
\subseteq V$ is a \textit{global offensive alliance} in $G$ if
\begin{equation}\label{alliaceCondition}\delta_S(v)\geq
\delta_{\overline{S}}(v)+1, \quad \forall v \in \overline{S}\end{equation} or, equivalently, \begin{equation}\label{alliaceCondition2}
d(v)\geq
2\delta_{\overline{S}}(v)+1,\quad \forall v \in \overline{S}.
\end{equation} Note that every global offensive alliance is a dominating set. The \textit{global offensive
alliance number} of $G$, denoted by $\gamma_o(G)$, is
defined as the minimum cardinality of a global offensive alliance in
$G$.  A global offensive alliance of cardinality $\gamma_o(G)$ is called a $\gamma_o(G)$-set.

We recall that given two graphs $G$ and $H$ with set of vertices
$V_1=\{v_1,v_2,...,v_{n_1}\}$ and $V_2=\{u_1,u_2,...,u_{n_2}\}$,
respectively, the Cartesian product of $G$ and $H$ is
the graph $G\square H=(V,E)$, where $V=V_1\times V_2$ and  two vertices
$(v_i,u_j)$ and $(v_k,u_l)$ are adjacent in $G\square H$ if and only if
\begin{itemize}
\item $v_i=v_k$ and $u_j\sim u_l$, or

\item $v_i\sim v_k$ and $u_j=u_l$.
\end{itemize}
Given two graphs $G=(V_1,E_1)$, $H=(V_2,E_2)$ and a set $X\subset V_1\times V_2$ of vertices of $G\Box H$, the projections of $X$ over $V_1$ and $V_2$  are denoted by
$P_G(X)$ and $P_H(X)$, respectively.
Moreover, given a set $C\subset V_1$ of vertices of $G$ and a vertex $v\in V_2$, a $G(C,v)$-{\em cell} in $G\Box H$ is the set $C^v=\{(u,v)\in V\,:\, u\in C\}$. A $v$-fiber $G_v$ is the copy of $G$ corresponding to the vertex $v$ of $H$. For every $v\in V_2$ and $D\subset V_1\times V_2$,  let $D_v$ be the set of vertices of $D$ belonging to the same $v$-fiber.

Now we establish a  Vizing-like conjecture for the global offensive alliance number.
\begin{conjecture}[{\rm  Vizing-like conjecture for the global offensive alliances}]
For any graphs $G$ and $H$,
$$\gamma_o(G\Box H)\ge \gamma_o(G)\gamma_o(H).$$
\end{conjecture}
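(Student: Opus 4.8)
The plan is to start from a $\gamma_o(G\Box H)$-set $S$ and decompose it along the two fiber directions. Write $V_1=V(G)$, $V_2=V(H)$; for each $v\in V_2$ let $S_v$ be the part of $S$ lying in the $v$-fiber $G_v$, with horizontal projection $P_G(S_v)\subseteq V_1$, and symmetrically, for each $u\in V_1$ let $S^u$ be the part of $S$ in the $H$-fiber at $u$, with projection $P_H(S^u)\subseteq V_2$. Since a vertex $(u,v)$ has degree $d(u)+d(v)$ in $G\Box H$ and its neighbours split into horizontal neighbours (counted by $\delta_{P_G(S_v)}(u)$) and vertical ones (counted by $\delta_{P_H(S^u)}(v)$), the alliance inequality (\ref{alliaceCondition2}) applied to each $(u,v)\in\overline S$ rewrites as
\begin{equation}\label{coupling}
2\,\delta_{P_G(S_v)}(u)+2\,\delta_{P_H(S^u)}(v)\ge d(u)+d(v)+1.
\end{equation}
This one inequality carries the whole difficulty: the two factor directions are coupled, and neither projection need satisfy the alliance condition of its own factor in isolation.

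From (\ref{coupling}) I would extract a dichotomy. Fix $(u,v)\in\overline S$. If $2\,\delta_{P_G(S_v)}(u)\le d(u)$, then (\ref{coupling}) forces $2\,\delta_{P_H(S^u)}(v)\ge d(v)+1$; hence every non-alliance vertex is \emph{handled} either within its $G$-fiber (by $P_G(S_v)$, in the sense of (\ref{alliaceCondition2}) read in $G$) or within its $H$-fiber (by $P_H(S^u)$, read in $H$). Call a fiber $G_v$ \emph{good} if no vertex of $\overline S$ in it is handled only by the vertical direction. For a good fiber the projection $P_G(S_v)$ satisfies (\ref{alliaceCondition2}) at every $u\notin P_G(S_v)$, so it is a global offensive alliance of $G$ and $|S_v|=|P_G(S_v)|\ge\gamma_o(G)$; the \emph{bad} fibers are exactly those whose deficiency must be paid for by full-strength alliance behaviour in the orthogonal $H$-fibers.

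To convert this into a product bound I would run a Clark--Suen type partition, which is also the engine behind the already established estimate $\gamma_o(G\Box H)\ge\frac{\gamma(G)\gamma_o(H)}{2}$. Fixing a minimum dominating set of $H$ partitions $V_2$ into $\gamma(H)$ blocks, each contained in the closed neighbourhood of one dominator, so that inside a block the vertical contributions $\delta_{P_H(S^u)}(v)$ are localized. Summing (\ref{coupling}) over such a block and over $u\in V_1$ then relates $\sum_{v}|S_v|=|S|$ to the number of full $H$-alliances occurring among the projections $P_H(S^u)$, each of size at least $\gamma_o(H)$. Carrying the bookkeeping through produces, at best, the factor-$\tfrac12$ estimate already present in the paper.

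The main obstacle is precisely the one that keeps the classical Vizing domination conjecture open: the cross coupling in (\ref{coupling}). A bad $G$-fiber may contribute far fewer than $\gamma_o(G)$ vertices, and although each such deficit is compensated by alliance behaviour in some $H$-fiber, that compensation is charged in the orthogonal direction, and there is no general combinatorial mechanism forcing the two independent counts to \emph{multiply} rather than merely to add, or to multiply with a $\tfrac12$ loss. I therefore expect this route to settle the conjecture only when one direction can be rigidified --- for instance when $G$ has an efficient dominating set, so that the blocks above become genuinely disjoint closed neighbourhoods and the factor $\tfrac12$ disappears, or when $\gamma_o(G)$ is pinned down by an explicit closed formula as for paths, cycles and complete graphs --- while the inequality for arbitrary $G$ and $H$ remains, as its status here indicates, a conjecture.
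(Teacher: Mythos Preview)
The statement you were given is a \emph{conjecture}, not a theorem: the paper offers no proof of it in full generality, and indeed presents it explicitly as an open problem. Your proposal is therefore not really a proof attempt but a (correct) explanation of why the natural Clark--Suen style argument stalls at the factor $\tfrac12$, together with an identification of the special cases (efficient dominating set in one factor; explicit formulas for paths, cycles, complete graphs) where the loss can be recovered. That diagnosis matches the paper exactly: its first theorem establishes precisely the bound $\gamma_o(G\Box H)\ge\tfrac12\gamma(G)\gamma_o(H)$ via the same partition-and-project mechanism you sketch, sharpens it to $\gamma(G)\gamma_o(H)$ when $G$ has an efficient dominating set, and then verifies the conjecture family by family for paths, cycles and complete graphs through explicit computations of $\gamma_o(G\Box H)$.

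So there is no gap to name and no alternative route to compare --- you and the paper agree that the general inequality is open, and your informal analysis of the obstruction (the cross-coupling in your inequality (\ref{coupling}) that prevents the two directional counts from multiplying) is an accurate account of why the partition argument loses the factor $2$.
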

Below we will prove the conjecture for several families of graphs.

\section{Results}
\begin{theorem}
For any graphs $G$ and $H$,
$$\gamma_o(G\Box H)\ge \frac{1}{2}\max\{\gamma(G)\gamma_o(H),\gamma_o(G)\gamma(H)\}.$$
Moreover, if $G$ has an efficient dominating set, then $$\gamma_o(G\Box H)\ge \gamma(G)\gamma_o(H).$$
\end{theorem}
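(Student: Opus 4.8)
The plan is to establish one fiberwise fact and use it twice --- with disjoint slabs coming from an efficient dominating set for the second inequality, and through a Clark--Suen--style double count for the $\tfrac12$-bound. Let $S$ be a global offensive alliance of $G\Box H$ and fix $a\in V_1$; put $T_a=\{u\in V_2:(a,u)\in S\}$ and $Z_a=\{u\in V_2:(b,u)\in S\text{ for some }b\in N(a)\}$. First I would check that $T_a\cup Z_a$ is a global offensive alliance of $H$ (or equals $V_2$): if $u\in V_2\setminus(T_a\cup Z_a)$ then $(a,u)\notin S$ and no vertex $(b,u)$ with $b\sim a$ lies in $S$, so every $S$-neighbour of $(a,u)$ lies in the fiber $\{a\}\times V_2$, a copy of $H$; hence $\delta_S((a,u))=\delta_{T_a}(u)$ and $\delta_{\overline S}((a,u))=\delta_{\overline{T_a}}(u)+d_G(a)$, and (\ref{alliaceCondition}) gives $\delta_{T_a}(u)\ge\delta_{\overline{T_a}}(u)+d_G(a)+1$, which is a fortiori the alliance inequality for $T_a\cup Z_a$ at $u$ since $T_a\subseteq T_a\cup Z_a$. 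Since $|T_a|=|S\cap(\{a\}\times V_2)|$ and $|Z_a|\le|S\cap(N(a)\times V_2)|$, this gives the inequality
\[
\gamma_o(H)\ \le\ |T_a|+|Z_a|\ \le\ |S\cap(N[a]\times V_2)|\qquad\text{for every }a\in V_1,
\]
which I will call $(\star)$. The second statement is then immediate: if $\{a_1,\dots,a_{\gamma(G)}\}$ is an efficient dominating set of $G$, the slabs $N[a_i]\times V_2$ are pairwise disjoint, so applying $(\star)$ with $a=a_i$ and summing, for $S$ a $\gamma_o(G\Box H)$-set, gives $\gamma_o(G\Box H)=|S|\ge\sum_i|S\cap(N[a_i]\times V_2)|\ge\gamma(G)\gamma_o(H)$.

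For the first inequality the neighbourhood slabs overlap, and this overlap is absorbed exactly as in Clark and Suen's proof that $\gamma(G\Box H)\ge\tfrac12\gamma(G)\gamma(H)$. I would fix a minimum dominating set $\{a_1,\dots,a_k\}$ of $G$, $k=\gamma(G)$, and a partition $V_1=Q_1\cup\dots\cup Q_k$ with $a_i\in Q_i\subseteq N[a_i]$ (assign each vertex of $G$ to one of its dominators, with $a_i$ placed in $Q_i$). Running the computation behind $(\star)$ inside the layer $\mathcal L_i=Q_i\times V_2$ --- applying (\ref{alliaceCondition}) at the vertices $(a_i,u)$ and splitting the $S$-neighbours of $(a_i,u)$ in the $G$-direction according to whether they lie in $Q_i$ --- should give, for each $i$,
\[
\gamma_o(H)\ \le\ |S\cap\mathcal L_i|\ +\ \big|S\cap\big((N(a_i)\setminus Q_i)\times V_2\big)\big|,
\]
the second term collecting the part of $S$ that helps dominate the fiber of $a_i$ from neighbouring layers. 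Summing over $i$ gives $\gamma(G)\gamma_o(H)\le|S|+\sum_{i=1}^k\big|S\cap\big((N(a_i)\setminus Q_i)\times V_2\big)\big|$, and it remains to bound the last sum by $|S|$ --- equivalently, to show that each vertex of $S$ is counted as a ``cross-layer helper'' of at most one layer. This is where the Clark--Suen bookkeeping (ordering the layers and charging each helper to a single layer) is used, giving $2|S|\ge\gamma(G)\gamma_o(H)$; exchanging $G$ and $H$ gives $\gamma_o(G\Box H)\ge\tfrac12\gamma_o(G)\gamma(H)$, and the larger of the two bounds is the claim.

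The main obstacle is exactly that last step: bounding $\sum_i|S\cap((N(a_i)\setminus Q_i)\times V_2)|$ by $|S|$. A crude estimate does not work, since a single vertex of $G$ may be adjacent to many of the $a_i$, so the Clark--Suen charging argument is essential here; everything else --- the identity $(\star)$, its layer-restricted form, and the efficient-domination case --- is routine manipulation of the alliance inequality.
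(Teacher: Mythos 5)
Your identity $(\star)$ is correct, and the efficient-dominating-set half of the theorem is completely proved by it: the verification that $T_a\cup Z_a$ is a global offensive alliance of $H$ (or all of $V_2$, and is nonempty because $S$ dominates the $a$-fiber) is sound, and since the slabs $N[a_i]\times V_2$ are pairwise disjoint when $\{a_1,\dots,a_{\gamma(G)}\}$ is efficient, summing $(\star)$ gives $\gamma_o(G\Box H)\ge\gamma(G)\gamma_o(H)$. This is a genuinely different, and cleaner, route than the paper's for that part.

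The $\tfrac12$-bound, however, has a real gap, exactly where you flagged it. The sum $\sum_{i=1}^{k}\bigl|S\cap\bigl((N(a_i)\setminus Q_i)\times V_2\bigr)\bigr|$ is \emph{not} bounded by $|S|$ in general: a vertex $(b,u)\in S$ is counted once for every $i$ with $b\in N(a_i)\setminus Q_i$, and since $b$ lies in only one cell $Q_{i_0}$ but may be adjacent to many of the $a_i$, its multiplicity can reach $k-1$, so the sum can approach $(k-1)|S|$ and your inequality degenerates to $\gamma_o(H)\le|S|$. The Clark--Suen charging cannot be grafted onto this sum, because each layer's inequality $\gamma_o(H)\le|S\cap\mathcal L_i|+|S\cap((N(a_i)\setminus Q_i)\times V_2)|$ genuinely consumes \emph{all} of its cross-layer helpers; you are not free to charge each helper to a single layer. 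The repair is to measure the deficiency of layer $i$ by \emph{columns} rather than by helper vertices: set $W_i=P_H(S\cap\mathcal L_i)$ and let $B_i\subseteq V_2$ be the set of $v$ at which $W_i$ violates the offensive alliance condition in $H$. Then $W_i\cup B_i$ is a global offensive alliance of $H$, so $\gamma_o(H)\le|W_i|+|B_i|\le|S\cap\mathcal L_i|+|B_i|$, and the double count $\sum_i|B_i|=\sum_{v\in V_2}q_v$ (with $q_v$ the number of layers for which $v$ is bad) is bounded column by column: if $v$ is bad for layer $i$, the alliance condition at the vertices of $Q_i\times\{v\}$ forces each of them to have an $S$-neighbour inside the $v$-fiber $G_v$, so $S_v=S\cap(V_1\times\{v\})$ together with the $a_j$ of the $\gamma(G)-q_v$ good layers dominates $G_v$, whence $q_v\le|S_v|$ and $\sum_v q_v\le|S|$. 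This yields $\gamma(G)\gamma_o(H)\le 2|S|$, which is the paper's argument; your $(\star)$ does not produce the sets $B_i$, so this step is not a routine continuation of your setup.
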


\begin{proof}
Let $V_1$ and $V_2$ be the vertex sets of the graphs $G$ and $H$, respectively.
Let $S=\{u_1,...,u_{\gamma(G)}\}$ be a dominating set for $G$. Let $\Pi=\{A_1,A_2,...,A_{\gamma(G)}\}$ be a vertex
partition of $G$ such that $u_i\in A_i$ and $ A_i\subseteq N[u_i]$.
Let $\{\Pi_1,\Pi_2,...,\Pi_{\gamma(G)}\}$ be a vertex partition of
$G\Box H$, such that $\Pi_i=A_i\times V_2$ for every
$i\in\{1,...,\gamma(G)\}$.

Let $D$ be a  $\gamma_o(G\Box H)$-set. Now,
for every $i\in \{1,...,\gamma(G)\}$, let $W_i=P_H(D\cap \Pi_i)$.
We have
\begin{equation}\label{paraCaso2}
\gamma_o(G\Box H)=|D|\ge \sum_{i=1}^{\gamma(G)}|W_i|.
\end{equation}
If $W_i$ is not a global offensive alliance in $H$, then there exists at least a vertex $v\in \overline{W_i}$ such that
\begin{equation}\label{alliance-cond-1}
\delta_{W_i}(v)<\delta_{\overline{W_i}}(v)+1.
\end{equation}
So, every vertex belonging to  $A_i^v$ has at least one neighbor in $D_v\cap \Pi_j$, for some $j\ne i$. For every  $v\in V_2$, let $A_{j_1}^v,A_{j_2}^v,...,A_{j_{q_v}}^v$ be the $G(A_{j_i},v)$-cells for which $v$ satisfies (\ref{alliance-cond-1}) and let $Y_v=S-\{u_{j_1},u_{j_2},...,u_{j_{q_v}}\}$. Since $Y_v$ dominates $V_1\times \{v\}-\bigcup_{i=1}^{q_v} A_{j_i}^v$ and $D_v$ dominates  $\bigcup_{i=1}^{q_v} A_{j_i}^v$, we have that $S_v=D_v\cup Y_v$ is a dominating set in the $v$-fiber $G_v$.

Now, for every $i\in \{1,...,\gamma(G)\}$, let $Q_i\subseteq \overline{W_i}$ be the set of vertices of $H$ satisfying the inequality (\ref{alliance-cond-1}). Since  $W_i\cup Q_i$ is a global offensive alliance in $H$,
\begin{equation}\label{paraCaso2otro}
\gamma_o(H)\le |W_i|+|Q_i|.
\end{equation}

Hence, we have that
\begin{align*}
\gamma_o(G\Box H)&\ge \sum_{i=1}^{\gamma(G)}|W_i|\\
&\ge\sum_{i=1}^{\gamma(G)}(\gamma_o(H)-|Q_i|)\\
&=\gamma(G)\gamma_o(H)-\sum_{i=1}^{\gamma(G)}|Q_i|,
\end{align*}

and, as a consequence, we have

\begin{equation}\label{cond-1}
\gamma_o(G\Box H)\ge \gamma(G)\gamma_o(H)-\sum_{i=1}^{\gamma(G)}|Q_i|.
\end{equation}

On the other hand, notice
that for each $v\in V_2$, $q_v$ is the number of $G(A_i,v)$-cells for which $v$ satisfies (\ref{alliance-cond-1}), as well as for each $i\in \{1,...,\gamma(G)\}$, $|Q_i|$ is the number of vertices of $H$ satisfying inequality (\ref{alliance-cond-1}). Thus,
\begin{equation}\label{sumasiguales}
\displaystyle\sum_{v\in V_2}q_v=\displaystyle\sum_{i=1}^{\gamma(G)}|Q_i|.
\end{equation}

 Now, if $q_v > |D_v|$, then we have
\begin{align*}
|S_v|&=|D_v|+|Y_v|\\
&=|S|-q_v+|D_v|\\
&=\gamma(G)-q_v+|D_v|\\
&<\gamma(G)-q_v+q_v\\
&=\gamma(G),
\end{align*}
which is a contradiction. So, we have $q_v \le |D_v|$
and we obtain
\begin{equation}\label{cond-2}
\sum_{v\in V_2}q_v\le \sum_{v\in V_2}|D_v|=\gamma_o(G\Box H),
\end{equation}
Thus, by  (\ref{cond-1}), (\ref{sumasiguales}) and (\ref{cond-2}) we deduce
$$\gamma_o(G\Box H)\ge \gamma(G)\gamma_o(H)-\gamma_o(G\Box H).$$
Analogously, we obtain that $\gamma_o(G\Box H)\ge \gamma_o(G)\gamma(H)-\gamma_o(G\Box H)$. Therefore, the first result follows.

Now, if $S=\{u_1,...,u_{\gamma(G)}\}$ is  an efficient  dominating set for $G$, then for every $i\in \{1,...,\gamma(G)\}$, $W_i$ is a global offensive alliance in $H$. That is, if we suppose that $W_i$ is not a global offensive alliance in $H$, then there exists at least one vertex $v\in \overline{W_i}$ which satisfies (\ref{alliance-cond-1}). Thus, every vertex belonging to $A_i^v$ has at least one neighbor in $D_v\cap \Pi_j$, for some $j\ne i$, which is a contradiction because  $(u_i,v)$ has no neighbors outside of $\Pi_i$. As  a consequence,  $|Q_i|=0$. So, (\ref{paraCaso2}) and (\ref{paraCaso2otro}) directly lead to $\gamma_o(G\Box H)\ge \gamma(G)\gamma_o(H)$.
\end{proof}

Notice that for the case of star graphs, $S_{1,n}$, the central vertex forms an efficient dominating set of minimum cardinality, and it is also a global offensive alliance, then the above theorem leads to the following Vizing-like result for the global offensive alliance number.

\begin{corollary}
Let $S_{1,n}$ be a star graph. For any graph $H$,
$$\gamma_o(S_{1,n}\Box H)\ge \gamma_o(S_{1,n})\gamma_o(H).$$
\end{corollary}

As the following remark shows there is no other family of connected graphs containing an efficient dominating set of minimum cardinality which is also a global offensive alliance.

\begin{remark}
A connected graph  $G$ contains an efficient dominating set of minimum cardinality which is also a global offensive alliance if and only if $G$ is a star graph.
\end{remark}

\begin{proof}
If $G$ is a star graph, then it is clear that the central vertex is an efficient dominating set of minimum cardinality, and also a global offensive alliance.

On the contrary, suppose $G$ is not a star graph and let $S=\{u_1,u_2,...,u_{\gamma(G)}\}$ be an efficient dominating set of minimum cardinality which is also a global offensive alliance in $G$. So, for every $u_i,u_j\in S$, $i\ne j$, we have that $N[u_i]\cap N[u_j]=\emptyset$. As a consequence, for every $v\in \overline{S}$ we have $\delta_S(v)=1$ and by inequality (\ref{alliaceCondition}) we have,
$\delta_S(v)\ge \delta_{\overline{S}}(v)+1.$
Hence, $\delta_{\overline{S}}(v)=0$ and so the degree of $v$ in $G$ is one, {\em i.e.}, every vertex outside of $S$ is an end-vertex.
Now, if $\gamma(G)\ge 2$, then since for every $u_i,u_j\in S$, $i\ne j$, we have that $N[u_i]\cap N[u_j]=\emptyset$ we obtain that $u_i\not\sim u_j$. So, $G$ is not connected, which is a contradiction. Therefore  $\gamma(G)= 1$ and, as a consequence, $G$ is a star graph.
\end{proof}

\begin{theorem}\label{ProdGxPn}
Let $P_{n}$ be a path graph of order $n$. For every graph $G$ of minimum degree $\delta\ge 1$,
$$\gamma_o(G\Box P_n)\ge  \left\lceil\frac{(n-1)\gamma_o(G)}{2}\right\rceil+\left\lceil\frac{\delta}{2}\right\rceil. $$
\end{theorem}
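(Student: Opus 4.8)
The plan is to fix a $\gamma_o(G\Box P_n)$-set $D$ and to analyse it copy-by-copy along the path $P_n$, whose vertices I label $w_1,\dots,w_n$ with $w_i\sim w_{i+1}$. For each $i$ write $D_i'=P_G(D\cap G_{w_i})$ for the projection onto $V(G)$ of the part of $D$ lying in the $i$-th $G$-fiber; then $|D_i'|$ is exactly the number of vertices of $D$ in that fiber, so $\gamma_o(G\Box P_n)=\sum_{i=1}^n|D_i'|$. I would first dispose of $n=1$ separately (there $G\Box P_n\cong G$ and the asserted bound is just $\gamma_o(G)\ge\lceil\delta/2\rceil$, which is immediate from (\ref{alliaceCondition2})), and assume $n\ge2$ from now on.

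The crucial step is to prove that $D_i'\cup D_{i+1}'$ is a global offensive alliance in $G$ for every $i\in\{1,\dots,n-1\}$. Take $v\notin D_i'\cup D_{i+1}'$ and look at the vertex $(v,w_i)\in\overline D$ of $G\Box P_n$: its neighbours are the $d(v)$ vertices $(u,w_i)$ with $u\sim_G v$ together with the vertical neighbours $(v,w_{i-1})$ (present only if $i>1$) and $(v,w_{i+1})$, and the choice of $v$ guarantees that $(v,w_{i+1})\notin D$, so $(v,w_{i-1})$ is the only neighbour of $(v,w_i)$ outside its own fiber that can lie in $D$. Substituting this into the alliance inequality (\ref{alliaceCondition}) at $(v,w_i)$ and splitting into the cases $i=1$, $i>1$ with $v\in D_{i-1}'$, and $i>1$ with $v\notin D_{i-1}'$, a short computation (in which the $\pm1$ indicator contributions cancel favourably) yields in each case the uniform conclusion $\delta_{D_i'}(v)\ge\delta_{\overline{D_i'}}(v)+1$. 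Passing from $D_i'$ to the larger set $D_i'\cup D_{i+1}'$ can only increase the left side and decrease the right side of this inequality, so the offensive alliance condition also holds at $v$ for $D_i'\cup D_{i+1}'$; since this set cannot be empty (otherwise the inequality just obtained would read $0\ge d(v)+1$ for all $v$), it is a global offensive alliance of $G$, and hence $|D_i'|+|D_{i+1}'|\ge\gamma_o(G)$.

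The second ingredient is the estimate $|D_1'|\ge\lceil\delta/2\rceil$, and symmetrically $|D_n'|\ge\lceil\delta/2\rceil$. Here I would take $v\notin D_1'$ — the case $D_1'=V(G)$ being trivial since then $|D_1'|=|V(G)|\ge\delta+1$ — and use that $(v,w_1)$ has degree exactly $d(v)+1$ in $G\Box P_n$. Then (\ref{alliaceCondition2}) forces $\delta_{\overline D}((v,w_1))\le d(v)/2$, hence $(v,w_1)$ has at least $\lceil d(v)/2\rceil+1$ neighbours in $D$; at most one of these is the vertical neighbour $(v,w_2)$, so $v$ has at least $\lceil d(v)/2\rceil\ge\lceil\delta/2\rceil$ neighbours inside $D_1'$, which already forces $|D_1'|\ge\lceil\delta/2\rceil$. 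The endpoint $w_n$ is treated identically.

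To conclude, I would add the $n-1$ inequalities $|D_i'|+|D_{i+1}'|\ge\gamma_o(G)$; the left-hand side equals $2\sum_{i=1}^n|D_i'|-|D_1'|-|D_n'|$, so $2\,\gamma_o(G\Box P_n)\ge(n-1)\gamma_o(G)+|D_1'|+|D_n'|\ge(n-1)\gamma_o(G)+2\lceil\delta/2\rceil$, and halving this and invoking integrality of $\gamma_o(G\Box P_n)$ and of $\lceil\delta/2\rceil$ gives precisely $\gamma_o(G\Box P_n)\ge\lceil\tfrac{(n-1)\gamma_o(G)}{2}\rceil+\lceil\tfrac{\delta}{2}\rceil$. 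I expect the genuine difficulty to be the crucial step: one must unwind the product alliance condition (\ref{alliaceCondition}) carefully enough to see that, although a single projected fiber $D_i'$ need not itself be a global offensive alliance of $G$, the union of two consecutive ones always is — and it is exactly the small case split on membership of $v$ in $D_{i-1}'$ that makes this work. The $\lceil\delta/2\rceil$ term is then essentially a bonus, harvested from the two endpoint fibers where the fiber vertices carry one extra, vertical neighbour.
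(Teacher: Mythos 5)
Your proposal is correct and follows essentially the same route as the paper: project the optimal set onto the $G$-fibers, show that the union of two consecutive projections is a global offensive alliance of $G$ (your direct verification that every $v\notin D_i'\cup D_{i+1}'$ already satisfies the alliance condition for $D_i'$ is just the contrapositive of the paper's observation that the ``bad'' vertices of $S_i$ must reappear in $S_{i+1}$), bound the two endpoint fibers by $\left\lceil\delta/2\right\rceil$ via the degree count at $(v,w_1)$, and sum the $n-1$ pairwise inequalities. The case analysis you sketch checks out in all three branches, so no gap.
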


\begin{proof}
 Let $S$ be a $\gamma_o(G\Box P_{n})$-set. Let $\{v_1,v_2,...,v_{n}\}$ be the set of vertices of $P_{n}$. Let $V_i$ be the vertex set of the $v_i$-fiber $G_{v_i}$ and let $S_i=P_G(S\cap V_i)$.

 For every $(x,v_1)\not\in S$ we have $ \delta_S(x,v_1)\ge \delta_{\overline{S}}(x,v_1)+1$. Adding $\delta_S(x,v_1)$ to both sides of this inequality we have  $2\delta_S(x,v_1)\ge d(x,v_1)+1$. Hence,
 $$2(|S_1|+1) \ge 2(\delta_{S_1}(x)+1)\ge 2\delta_S(x,v_1)\ge d(x,v_1)+1=d(x)+2\ge \delta+2.$$ As a consequence,  $|S_1|\ge \left\lceil\frac{\delta}{2}\right\rceil$.
 Analogously we show that $|S_n|\ge \left\lceil\frac{\delta}{2}\right\rceil$.

We suppose there exists $i\in \{1,2,...,n-1\}$ such that $S_i$ is not a global offensive alliance in $G$. Let $S'_i\subset V_i-S_i$  such that $\delta_{S_i}(x)< \delta_{\overline{S_i}}(x)+1$, for every $x\in S'_i$. Now let $x\in S'_i$ and suppose $(x,v_{i+1})\not\in S$. If $i=1$, then  $\delta_S(x,v_1)=\delta_{S_1}(x)<\delta_{\overline{S_1}}(x)+1=\delta_{\overline{S}}(x,v_1),$ a contradiction.  If $1<i<n$, then $\delta_S(x,v_i)\le \delta_{S_i}(x)+1<\delta_{\overline{S_i}}(x)+2\le \delta_{\overline{S}}(x,v_i)+1,$ also a contradiction. Hence, if  $S_i$ is not a global offensive alliance in $G$, then for every $x\in S'_i$ we have  $(x,v_{i+1})\in S$. As a consequence, $S_i\cup S'_i$ is a global offensive alliance in $G$ and for every $i\in \{1,2,...,n-1\}$, $|S_i\cup S_{i+1}|\ge |S_i\cup S'_i|\ge \gamma_o(G)$.
So,
$ \sum_{i=1}^{{n}-1}| S_i\cup S_{i+1}|\ge ({n}-1)\gamma_o(G)$ and we have
 $$ 2|S|=2\sum_{i=1}^{n}|S_i|=\sum_{i=1}^{n-1}|S_i\cup S_{i+1}| + |S_1|+|S_n|\ge (n-1)\gamma_o(G)+|S_1|+|S_{n}|\ge (n-1)\gamma_o(G)+2 \left\lceil\frac{\delta}{2}\right\rceil.$$
  Therefore,
$|S|\ge \frac{(n-1)\gamma_o(G)}{2}+\left\lceil\frac{\delta}{2}\right\rceil.$
\end{proof}

We note that since $\gamma_o(P_{n})=\left\lfloor\frac{n}{2}\right\rfloor$, the above theorem leads to the following corollary.
\begin{corollary}
Let $P_{2k+1}$ be a path graph of odd order. For any graph $G$, $$\gamma_o(G\Box P_{2k+1})> \gamma_o(G)\gamma_o(P_{2k+1}).$$
\end{corollary}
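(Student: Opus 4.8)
The plan is to apply Theorem~\ref{ProdGxPn} with $n = 2k+1$ and then compare the resulting lower bound against the product $\gamma_o(G)\gamma_o(P_{2k+1})$. First I would note that a path graph has minimum degree $\delta = 1$ (assuming $n \ge 2$, so $2k+1 \ge 3$, i.e. $k \ge 1$), hence $\left\lceil \frac{\delta}{2} \right\rceil = 1$. Substituting $n = 2k+1$ into the statement of Theorem~\ref{ProdGxPn} gives
$$\gamma_o(G\Box P_{2k+1}) \ge \left\lceil \frac{(2k+1-1)\gamma_o(G)}{2} \right\rceil + 1 = \left\lceil \frac{2k\,\gamma_o(G)}{2} \right\rceil + 1 = k\,\gamma_o(G) + 1.$$

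Next I would use the known value $\gamma_o(P_{2k+1}) = \left\lfloor \frac{2k+1}{2} \right\rfloor = k$, as recalled in the text just before the corollary. Therefore $\gamma_o(G)\gamma_o(P_{2k+1}) = k\,\gamma_o(G)$, and the bound above reads $\gamma_o(G\Box P_{2k+1}) \ge k\,\gamma_o(G) + 1 > k\,\gamma_o(G) = \gamma_o(G)\gamma_o(P_{2k+1})$, which is exactly the claimed strict inequality.

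The only subtlety — and the one point that needs a word of care rather than real work — is the hypothesis of Theorem~\ref{ProdGxPn}, which requires $G$ to have minimum degree $\delta \ge 1$, i.e. $G$ has no isolated vertices. For a graph $G$ with an isolated vertex the statement would need separate justification (or the corollary should be read as implicitly assuming $G$ has no isolated vertices, which is the standard convention when discussing offensive alliances since $\gamma_o$ is only well behaved then); I would simply remark that we assume $G$ has minimum degree at least $1$, matching the hypothesis of the theorem invoked. No other obstacle arises: the argument is a direct substitution followed by the observation that the additive constant $\left\lceil \delta/2 \right\rceil \ge 1$ is what upgrades the inequality from ``$\ge$'' to the strict ``$>$''.
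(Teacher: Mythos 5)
Your proof is correct and is essentially the paper's own argument: the corollary is stated as a direct consequence of Theorem~\ref{ProdGxPn} together with $\gamma_o(P_n)=\left\lfloor n/2\right\rfloor$, exactly the substitution you carry out. Your remark about the hypothesis $\delta\ge 1$ (no isolated vertices in $G$) is a fair point that the paper silently glosses over, but it does not change the substance of the argument.
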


\begin{theorem}\label{GporCiclos}
Let $C_{n}$ be a cycle graph of order $n$. For every graph $G$,
$$\gamma_o(G\Box C_n)\ge \left\lceil \frac{n\gamma_o(G)}{2}\right\rceil.$$
\end{theorem}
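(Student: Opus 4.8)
The plan is to follow the scheme of Theorem~\ref{ProdGxPn}, exploiting that $C_n$ is $2$-regular so that, unlike the path case, no boundary fibers need special treatment and hence no correction term appears. Let $S$ be a $\gamma_o(G\Box C_n)$-set, let $v_1,v_2,\dots,v_n$ be the vertices of $C_n$ with indices read modulo $n$ (so that $v_{n+1}=v_1$), let $V_i$ be the vertex set of the $v_i$-fiber $G_{v_i}$, and put $S_i=P_G(S\cap V_i)$. Since $|S|=\sum_{i=1}^n|S_i|$, the result will follow once I establish that
\[
|S_i\cup S_{i+1}|\ge \gamma_o(G)\qquad\text{for every } i\in\{1,\dots,n\},
\]
because adding these $n$ inequalities gives $n\,\gamma_o(G)\le\sum_{i=1}^n|S_i\cup S_{i+1}|\le\sum_{i=1}^n\bigl(|S_i|+|S_{i+1}|\bigr)=2|S|$, and then $|S|\ge\left\lceil n\gamma_o(G)/2\right\rceil$ because $|S|$ is an integer.

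To prove the displayed inequality, fix $i$. If $S_i$ is already a global offensive alliance in $G$ then $|S_i\cup S_{i+1}|\ge|S_i|\ge\gamma_o(G)$ and we are done. Otherwise let $S'_i\subseteq V_i-S_i$ be the set of those vertices $x$ with $\delta_{S_i}(x)<\delta_{\overline{S_i}}(x)+1$. Exactly as in the proof of Theorem~\ref{ProdGxPn}, enlarging $S_i$ by $S'_i$ cannot destroy the alliance condition at the remaining vertices of $G$, so $S_i\cup S'_i$ is a global offensive alliance in $G$ and $|S_i\cup S'_i|\ge\gamma_o(G)$. Hence it suffices to show $S'_i\subseteq S_{i+1}$, for then $|S_i\cup S_{i+1}|\ge|S_i\cup S'_i|\ge\gamma_o(G)$.

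For this inclusion, pick $x\in S'_i$ and suppose toward a contradiction that $(x,v_{i+1})\notin S$. In $G\Box C_n$ the neighbours of $(x,v_i)$ are the vertices $(y,v_i)$ with $y\sim x$ in $G$ together with $(x,v_{i-1})$ and $(x,v_{i+1})$; since $(x,v_{i+1})\notin S$ we get $\delta_S(x,v_i)\le\delta_{S_i}(x)+1$ and $\delta_{\overline S}(x,v_i)\ge\delta_{\overline{S_i}}(x)+1$. Combining this with $\delta_{S_i}(x)<\delta_{\overline{S_i}}(x)+1$ yields
\[
\delta_S(x,v_i)\le\delta_{S_i}(x)+1<\delta_{\overline{S_i}}(x)+2\le\delta_{\overline S}(x,v_i)+1,
\]
which contradicts the fact that $S$ is a global offensive alliance in $G\Box C_n$ and $(x,v_i)\in\overline S$. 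Therefore $(x,v_{i+1})\in S$, i.e.\ $x\in S_{i+1}$, proving $S'_i\subseteq S_{i+1}$.

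The only delicate point is the last step: one must correctly account for the contributions of the two adjacent fibers $G_{v_{i-1}}$ and $G_{v_{i+1}}$ to $\delta_S(x,v_i)$ and to $\delta_{\overline S}(x,v_i)$. Everything else is routine bookkeeping, and the cyclic indexing actually makes the argument a shade cleaner than in Theorem~\ref{ProdGxPn}, since there is no endpoint fiber requiring a separate estimate.
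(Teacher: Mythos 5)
Your proof is correct and follows essentially the same route as the paper, which simply invokes the argument of Theorem~\ref{ProdGxPn} to get $|S_i\cup S_{i+1}|\ge\gamma_o(G)$ for all $i$ modulo $n$ and then double-counts. Your writeup is in fact slightly more careful at the final step (using $2|S|\ge\sum_i|S_i\cup S_{i+1}|$ as an inequality rather than the equality the paper asserts), and correctly observes that the $2$-regularity of $C_n$ removes the boundary-fiber term.
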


\begin{proof}
Let $S$ be a $\gamma_o(G\Box C_{n})$-set. Let $\{v_0,v_2,...,v_{n-1}\}$ be the set of vertices of $C_{n}$. Let $V_i$ be the vertex set of the $v_i$-fiber $G_{v_i}$ and let $S_i=P_G(S\cap V_i)$.
Proceeding like in the proof of Theorem \ref{ProdGxPn} we show that for every $i\in \{0,1,...,n-1\}$, $|S_i\cup S_{i+1}|\ge \gamma_o(G)$, where the subscripts are taken modulo $n$. Hence,  $$2|S|=\sum_{i=0}^{{n-1}}| S_i\cup S_{i+1})|\ge n\gamma_o(G).$$ Therefore, the  result follows.
\end{proof}


In order to deduce an upper bound on $\gamma_o(G\Box H)$ we are going to introduce two known results. It was shown in \cite{cota-global} that for every connected graph $G$ of order $n\ge 2$ and independence number $\alpha(G)$, it follows that
\begin{equation}\label{CotaOffInd}
\gamma_o(G)+\alpha(G)\le n.
\end{equation}
The {\em eccentricity} of a vertex $v$ of a connected graph $G$ is the maximum  distance between $v$ and any other vertex $u$ of G. The {\em radius} of $G$ is the minimum  eccentricity of any vertex in $G$. It was shown in \cite{CartesianIndependence} that for every connected graphs $G$ and $H$ of radius $r(G)$ and $r(H)$, it follows that
\begin{equation}\label{CotaIndepCartesian}
\alpha(G\Box H)\ge 2r(G)r(H).
\end{equation}
As a direct consequence of (\ref{CotaOffInd}) and (\ref{CotaIndepCartesian}) we have the following bound.
\begin{proposition}\label{cotasuperior}
For any connected graphs $G$ and $H$ of order $n_1$ and $n_2$ and radius $r(G)$ and $r(H)$, respectively,
$$\gamma_o(G\Box H)\le n_1n_2- 2r(G)r(H).$$
\end{proposition}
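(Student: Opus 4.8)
The plan is to apply the two quoted inequalities directly to the product graph $G\Box H$ itself, with no further combinatorial work needed. First I would record that, since $G$ and $H$ are connected, the Cartesian product $G\Box H$ is connected as well, and its order is $n_1n_2\ge 2$ because $n_1,n_2\ge 2$; hence the hypotheses of inequality (\ref{CotaOffInd}) are met with $G$ replaced by $G\Box H$. This yields
$$\gamma_o(G\Box H)\le n_1n_2-\alpha(G\Box H).$$

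Next I would invoke inequality (\ref{CotaIndepCartesian}), namely $\alpha(G\Box H)\ge 2r(G)r(H)$, which applies since $G$ and $H$ are connected. Substituting this lower bound on $\alpha(G\Box H)$ into the previous display gives
$$\gamma_o(G\Box H)\le n_1n_2-\alpha(G\Box H)\le n_1n_2-2r(G)r(H),$$
which is exactly the asserted bound, and completes the argument.

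Since the proposition is explicitly flagged as a direct consequence of (\ref{CotaOffInd}) and (\ref{CotaIndepCartesian}), there is essentially no obstacle to overcome. The only point that warrants a moment's attention is verifying the hypotheses of (\ref{CotaOffInd})—connectedness and order at least two—for the product graph rather than for the factors; both follow at once from the stated assumptions on $G$ and $H$. If one did not wish to rely on (\ref{CotaIndepCartesian}) as a black box, an alternative would be to exhibit a large independent set of $G\Box H$ directly, e.g.\ by taking a vertex $g_0$ of eccentricity $r(G)$ in $G$ and $h_0$ of eccentricity $r(H)$ in $H$ and forming an independent set inside a product of breadth-first layers about $(g_0,h_0)$; but this reconstruction is unnecessary here.
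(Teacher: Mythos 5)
Your proposal is correct and is exactly the argument the paper intends: the proposition is stated there as a direct consequence of (\ref{CotaOffInd}) applied to the connected graph $G\Box H$ together with the independence bound (\ref{CotaIndepCartesian}), with no further work. Your additional check of the hypotheses of (\ref{CotaOffInd}) for the product graph is a sensible (if routine) bit of diligence that the paper leaves implicit.
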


The above bound is tight. It is achieved, for instance, for the torus graphs $C_{2k}\Box C_{2k'}$ (see Proposition \ref{CiclosporCiclos}), for the grid graphs $P_{2k}\Box P_{2k'}$ (see Proposition \ref{ProdPnxPn}) and for the cylinder graphs $P_{2k}\Box C_{2k'}$ (see Proposition \ref{cilindros}).

We derive another upper bound on $\gamma_o(G\Box H)$ from (\ref{CotaOffInd}) and the following bound on $\alpha(G\Box H)$ obtained in \cite{Imrich} for every bipartite graph $G$ of order $n$:
  \begin{equation}\label{CotaIndepCartesian2}
\alpha(G\Box H)\ge \frac{n}{2}\alpha_2(H),
\end{equation}
where $\alpha_2(H)$ is the bipartite number  of $H$, i.e., the order of the largest
induced bipartite subgraph of $H$.
\begin{proposition}\label{cotasuperior2}
For any connected bipartite graph $G$ of order $n_1$ and any connected graph $H$ of order $n_2$ and bipartite number $\alpha_2(H)$,
$$\gamma_o(G\Box H)\le n_1\left(n_2- \frac{\alpha_2(H)}{2}\right).$$
\end{proposition}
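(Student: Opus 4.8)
The plan is to obtain this bound as an immediate consequence of the two displayed inequalities that precede the statement, exactly in the spirit of how Proposition \ref{cotasuperior} was derived from (\ref{CotaOffInd}) and (\ref{CotaIndepCartesian}). First I would apply (\ref{CotaOffInd}) to the graph $G\Box H$ itself: since $G$ and $H$ are connected, $G\Box H$ is connected, and (under the standing assumption, used throughout the paper, that its order $n_1n_2$ is at least two) we obtain
$$\gamma_o(G\Box H)\le n_1n_2-\alpha(G\Box H).$$

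Next I would bound $\alpha(G\Box H)$ from below using (\ref{CotaIndepCartesian2}), taking the bipartite factor in the role of the first graph there. Since $G$ is a connected bipartite graph of order $n_1$, this gives
$$\alpha(G\Box H)\ge \frac{n_1}{2}\,\alpha_2(H).$$
Substituting this lower bound into the previous inequality and simplifying yields
$$\gamma_o(G\Box H)\le n_1n_2-\frac{n_1}{2}\,\alpha_2(H)=n_1\left(n_2-\frac{\alpha_2(H)}{2}\right),$$
which is the asserted conclusion.

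Since the whole argument is a one-step combination of two already-established inequalities, I do not expect any genuine obstacle; the only point that deserves a word of care is checking that the hypotheses of (\ref{CotaOffInd}) and (\ref{CotaIndepCartesian2}) are satisfied. Inequality (\ref{CotaOffInd}) requires $G\Box H$ to be connected of order at least two, which holds because $G$ and $H$ are connected (and, consistently with the rest of the paper, of order at least two), while (\ref{CotaIndepCartesian2}) is stated precisely for a bipartite first factor, which is exactly the hypothesis imposed on $G$. Hence no further work beyond the substitution above is needed.
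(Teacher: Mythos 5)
Your proposal is correct and is exactly the argument the paper intends: Proposition \ref{cotasuperior2} is stated as a direct consequence of applying (\ref{CotaOffInd}) to the connected graph $G\Box H$ and substituting the lower bound (\ref{CotaIndepCartesian2}) for $\alpha(G\Box H)$. Your added remarks on checking the hypotheses are sensible but do not change the route.
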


As direct consequence of Proposition \ref{cotasuperior2} we obtain the following result.

\begin{corollary}\label{bipartitos}
Let $G$ and  $H$ be two connected bipartite graphs  of order $n_1$ and $n_2$, respectively.  Then  $ \gamma_o(G\Box H)\le  \frac{n_1n_2}{2}$.
\end{corollary}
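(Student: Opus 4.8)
The plan is to simply specialize Proposition~\ref{cotasuperior2} to the case where the second factor $H$ is itself bipartite. Recall that $\alpha_2(H)$ denotes the order of the largest induced bipartite subgraph of $H$; when $H$ is bipartite, the whole graph $H$ is an induced bipartite subgraph of itself, so $\alpha_2(H)=n_2$. Since the hypotheses of Corollary~\ref{bipartitos} guarantee that $G$ is connected and bipartite and $H$ is connected, Proposition~\ref{cotasuperior2} applies verbatim.

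Substituting $\alpha_2(H)=n_2$ into the bound $\gamma_o(G\Box H)\le n_1\bigl(n_2-\tfrac{\alpha_2(H)}{2}\bigr)$ of Proposition~\ref{cotasuperior2} yields
$$\gamma_o(G\Box H)\le n_1\left(n_2-\frac{n_2}{2}\right)=\frac{n_1 n_2}{2},$$
which is exactly the claimed inequality. (Alternatively, by the symmetry $G\Box H\cong H\Box G$ one could instead use the bipartiteness of $G$ together with $\alpha_2(G)=n_1$; either route gives the same conclusion, and it is worth recording this symmetry in case one wants the sharper of the two estimates when only one factor is known to be bipartite.)

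I do not anticipate any genuine obstacle here: the only thing to verify is the elementary observation $\alpha_2(H)=n_2$ for bipartite $H$, which is immediate from the definition of the bipartite number, and then the computation is a one-line substitution. If anything requires a word of care, it is merely checking that the connectivity and order hypotheses needed to invoke (\ref{CotaOffInd}) and (\ref{CotaIndepCartesian2}) — and hence Proposition~\ref{cotasuperior2} — are all subsumed by the hypotheses of the corollary, which they are.
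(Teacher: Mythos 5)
Your proposal is correct and is exactly the paper's intended derivation: the corollary is stated as a direct consequence of Proposition~\ref{cotasuperior2}, obtained by noting that $\alpha_2(H)=n_2$ when $H$ is bipartite and substituting into the bound. Nothing further is needed.
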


The above bound is tight. It is achieved, for instance, for the torus graphs $C_{2k}\Box C_{2k'}$ (see Proposition \ref{CiclosporCiclos}), for the grid graphs $P_{2k}\Box P_{2k'}$ (see Proposition \ref{ProdPnxPn}) and for the cylinder graphs $P_{2k}\Box C_{2k'}$ (see Proposition \ref{cilindros}).

We recall that a graph $H=(V,E)$ is partitionable into two global offensive alliances if there exists a partition $\{Y_1,Y_2\}$ of $V$ such that both $Y_1$ and $Y_2$ are global offensive alliances in $H$ \cite{yero1}.

\begin{theorem}\label{partition-two-global}
Let $H$ be a graph of order $n$.  If $H$ is partitionable into two global offensive alliances, then $$\gamma_o(K_{r}\Box H)\le \left\lfloor\frac{rn}{2}\right\rfloor.$$
\end{theorem}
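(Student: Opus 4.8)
The plan is to construct an explicit global offensive alliance in $K_r \Box H$ using the given partition $\{Y_1, Y_2\}$ of $V(H)$ into two global offensive alliances. Label the vertices of $K_r$ as $\{x_1, \dots, x_r\}$ and recall that in $K_r$ every pair of vertices is adjacent. The key observation is that in $K_r \Box H$, a vertex $(x_i, u)$ is adjacent to $(x_j, u)$ for all $j \neq i$ (the entire $u$-fiber minus the vertex itself) and to $(x_i, w)$ whenever $u \sim w$ in $H$.

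First I would set up the candidate set. Write $\lfloor r/2 \rfloor = t$, so $r = 2t$ or $r = 2t+1$. In each $u$-fiber I want to select roughly half of the $K_r$-copies' vertices, but which half should depend on whether $u \in Y_1$ or $u \in Y_2$, so that a vertex outside the alliance gets enough help. Concretely, I would try $S = (A \times Y_1) \cup (B \times Y_2)$ for suitably chosen $A, B \subseteq \{x_1,\dots,x_r\}$ with $|A| = |B| = \lceil r/2 \rceil$ or thereabouts, chosen so that $A \cup B = \{x_1,\dots,x_r\}$ and $|A \cap B|$ is as small as possible; then $|S| = |A|\,|Y_1| + |B|\,|Y_2|$, and one optimizes the sizes to land at $\lfloor rn/2 \rfloor$. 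For $r$ even one can take $A$ and $B$ to be complementary halves of size $r/2$, giving $|S| = (r/2)|Y_1| + (r/2)|Y_2| = rn/2$; for $r$ odd a little more care is needed to hit $\lfloor rn/2 \rfloor$, likely taking $|A| = \lceil r/2\rceil$, $|B| = \lfloor r/2 \rfloor$ on the larger of $Y_1, Y_2$ — or rather balancing so the total is minimized.

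Next I would verify the alliance condition (\ref{alliaceCondition}) for an arbitrary vertex $(x_i, u) \notin S$. Say $u \in Y_1$; then $x_i \notin A$. The neighbors of $(x_i,u)$ inside $S$ split into two contributions: (i) within the $u$-fiber, the vertices $(x_j, u)$ with $x_j \in A$, contributing $|A|$ neighbors in $S$, while the fiber-neighbors outside $S$ number $r - 1 - |A|$; (ii) across fibers, for each neighbor $w$ of $u$ in $H$, the vertex $(x_i, w)$ lies in $S$ iff ($w \in Y_1$ and $x_i \in A$) — impossible since $x_i \notin A$ — or ($w \in Y_2$ and $x_i \in B$). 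So the cross-fiber neighbors in $S$ are exactly those $w \in N_H(u) \cap Y_2$ with $x_i \in B$, and cross-fiber neighbors outside $S$ are the remaining $w \in N_H(u)$. Then I would add the two contributions and compare with $\delta_{\overline S}(x_i,u) + 1$, using that $Y_1$ is a global offensive alliance of $H$ (so $\delta_{Y_1}(u) \ge \delta_{\overline{Y_1}}(u) + 1$, i.e. $\delta_{Y_2}(u) \ge \delta_{Y_1}(u) + \dots$ — careful: since $u \in Y_1 = \overline{Y_2}$, the alliance condition for $Y_2$ gives $\delta_{Y_2}(u) \ge \delta_{Y_1}(u) + 1$) together with $|A| \ge r - 1 - |A|$ from $|A| \ge \lceil r/2\rceil \ge (r-1)/2 \cdot \dots$, hmm, more precisely $|A| \ge r/2$ forces $|A| \ge r - 1 - |A| + [r \text{ even}]$. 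The point is that the fiber contribution alone already gives a surplus of at least one when $x_i \notin A$ and $|A| \ge \lceil r/2 \rceil$, namely $|A| - (r-1-|A|) = 2|A| - r + 1 \ge 1$, and the cross-fiber terms never hurt: one checks $\delta_{\text{cross},S}(x_i,u) \ge \delta_{\text{cross},\overline S}(x_i,u)$ does NOT in general hold, so the fiber surplus of $1$ must absorb any cross-fiber deficit — here is where I'd need the deficit to be controlled, and this is the delicate point.

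The main obstacle I anticipate is exactly that the cross-fiber neighbors can be mostly outside $S$ (e.g. if $x_i \notin B$ as well, which happens when $x_i \in A \cap B$... wait, $x_i \notin A$ here, so $x_i$ could still fail to be in $B$ only if $A \cup B \neq$ everything, which I forbade), so in fact $x_i \notin A \Rightarrow x_i \in B$ provided $A \cup B = \{x_1,\dots,x_r\}$. Good — then cross-fiber neighbors of $(x_i,u)$ in $S$ are precisely $N_H(u) \cap Y_2$ and those outside are $N_H(u) \cap Y_1$, and the alliance property of $Y_2$ in $H$ gives $|N_H(u)\cap Y_2| \ge |N_H(u) \cap Y_1| + 1 \ge |N_H(u)\cap Y_1|$, so the cross-fiber part is already nonnegative and the fiber part contributes the needed $+1$. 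So the real constraint is just $A \cup B = \{x_1,\dots,x_r\}$, $|A| \ge \lceil r/2\rceil$ when used "against" $Y_1$-vertices and symmetrically for $B$; with $r$ even, take $A, B$ each of size $r/2$ and complementary — but then $A \cup B$ is everything yet $|A| = r/2 < \lceil r/2 \rceil$? No, $r/2 = \lceil r/2\rceil$ for even $r$, and $2(r/2) - r + 1 = 1 \ge 1$, exactly tight. For odd $r$, I would take $|A| = |B| = (r+1)/2$ with $|A \cap B| = 1$, giving $|S| = \frac{r+1}{2}(|Y_1| + |Y_2|) = \frac{(r+1)n}{2}$ which overshoots $\lfloor rn/2\rfloor$; so instead one should put the bigger set against the smaller of $Y_1,Y_2$. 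Assume WLOG $|Y_1| \le |Y_2|$, so $|Y_2| \ge \lceil n/2 \rceil$. Take $|A| = \lceil r/2 \rceil$ (used with $Y_1$) but for vertices over $Y_2$ we need their complement-in-fiber structure: re-examine — actually the cleanest fix is $S = (A \times Y_1) \cup (B \times Y_2)$ with $|A| = \lceil r/2\rceil$, $|B| = \lfloor r/2 \rfloor$, $A \cup B = \{x_1,\dots,x_r\}$ (possible since $\lceil r/2\rceil + \lfloor r/2\rfloor = r$, so $A, B$ complementary, $A \cap B = \emptyset$). Then for $(x_i,u) \notin S$ with $u \in Y_2$: $x_i \notin B$ so $x_i \in A$; fiber surplus $= 2|B| - r + 1 = 1 - [r\text{ odd}]$, which is $0$ when $r$ is odd — here the cross-fiber alliance surplus of $\ge 1$ from $Y_1$ being an alliance saves the day. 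And $|S| = \lceil r/2\rceil |Y_1| + \lfloor r/2\rfloor|Y_2| \le \lfloor r/2 \rfloor(|Y_1|+|Y_2|) + (\lceil r/2\rceil - \lfloor r/2\rfloor)|Y_1| = \lfloor r/2\rfloor n + [r\text{ odd}]|Y_1|$, which for odd $r$ is $\lfloor r/2\rfloor n + |Y_1|$; since $|Y_1| \le n/2$ this is $\le \lfloor r/2\rfloor n + n/2 = \frac{rn}{2} \le \lceil rn/2\rceil$ — so I'd need to be slightly sharper to get the floor, possibly by noting $|Y_1| \le \lfloor n/2\rfloor$ and $rn$ odd forcing... this parity bookkeeping at the end is the routine-but-fiddly part I'd have to nail down, but the structural heart of the argument — complementary halves of $K_r$ paired with the two alliance-classes of $H$ — is what makes it work.
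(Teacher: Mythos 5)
Your construction is exactly the paper's: you take complementary halves $A,B$ of $V(K_r)$ with $|A|=\left\lceil r/2\right\rceil$ paired with the smaller class $Y_1$ and $|B|=\left\lfloor r/2\right\rfloor$ paired with the larger class $Y_2$, and verify the offensive condition by combining the in-fiber surplus with the cross-fiber surplus coming from the alliance property of $Y_1$ and $Y_2$, just as in the paper's proof of Theorem \ref{partition-two-global}. The final "fiddly" step you left open is immediate: $|S|=\left\lceil r/2\right\rceil|Y_1|+\left\lfloor r/2\right\rfloor|Y_2|\le \frac{rn}{2}$ and $|S|$ is an integer, so $|S|\le\left\lfloor rn/2\right\rfloor$.
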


\begin{proof}
Let $U=\{u_1,u_2,...,u_{r}\}$  and $V=\{v_1,v_2,...,v_{n}\}$ be the sets of vertices of $K_r$ and $H$, respectively.
Let  $\{Y_1,Y_2\}$ be  a partition of $V$ such that both $Y_1$ and $Y_2$ are global offensive alliances in $H$, where $|Y_1|\le |Y_2|$. Let $X_1=\{u_1,u_2,...,u_{\left\lceil\frac{r}{2}\right\rceil}\}$ and let
$X_2=\{u_{\left\lceil\frac{r}{2}\right\rceil+1},u_{\left\lceil\frac{r}{2}\right\rceil+2},...,u_r\}$. Let us show that $S=(X_1\times Y_1)\cup (X_2\times Y_2)$ is a global offensive alliance in $K_r\Box H$.
If $(u,v)\in X_1\times Y_2$, then
\begin{align*}
   \delta_S(u,v)&=\delta_{X_2}(u)+\delta_{Y_1}(v)\\
   &=\left\lfloor\frac{r}{2}\right\rfloor+\delta_{Y_1}(v)\\
   &\ge \left\lceil\frac{r}{2}\right\rceil-1+\delta_{Y_1}(v)\\
   &\ge \left\lceil\frac{r}{2}\right\rceil-1+\delta_{Y_2}(v)+1\\
   &=\delta_{X_1}(u)+\delta_{Y_2}(v)+1\\
   &=\delta_{\overline{S}}(u,v)+1.
 \end{align*}

Now, if $(u,v)\in X_2\times Y_1$, then
\begin{align*}
   \delta_S(u,v)&=\delta_{X_1}(u)+\delta_{Y_2}(v)\\
   &=\left\lceil\frac{r}{2}\right\rceil+\delta_{Y_2}(v)\\
   &\ge \left\lceil\frac{r}{2}\right\rceil+\delta_{Y_1}(v)+1\\
   &\ge \left\lfloor\frac{r}{2}\right\rfloor-1+\delta_{Y_1}(v)+1\\
   &=\delta_{X_2}(u)+\delta_{Y_1}(v)+1\\
   &=\delta_{\overline{S}}(u,v)+1.
 \end{align*}
 Hence,  $S=(X_1\times Y_1)\cup (X_2\times Y_2)$ is a global offensive alliance in $K_r\Box H$. If $r$ is even, then $|S|=\frac{rn}{2}$ and, if $r$ is odd, then
  $|S|=\frac{r+1}{2} |Y_1| + \frac{r-1}{2}  | Y_2|=\frac{r}{2}n  + \frac{|Y_1|-|Y_2|}{2}\le \frac{r}{2}n$.
 The proof is complete.
\end{proof}

\begin{proposition}\label{bamboo}
The global offensive alliance number of the bamboo graph $K_r\Box P_t$ is
$$\gamma_o(K_r\Box P_t)=\left\lfloor\frac{rt}{2}\right\rfloor.$$
\end{proposition}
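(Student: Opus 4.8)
The proof will establish the equality by a matching pair of bounds: the upper bound $\gamma_o(K_r\Box P_t)\le\lfloor rt/2\rfloor$ from Theorem~\ref{partition-two-global}, and a direct lower bound $\gamma_o(K_r\Box P_t)\ge\lfloor rt/2\rfloor$ obtained by counting in the fibers. For the upper bound, the plan is to observe that $P_t$ is partitionable into two global offensive alliances: take $Y_1$ to be the set of odd-indexed vertices and $Y_2$ the set of even-indexed vertices of $P_t$. Every vertex of $P_t$ outside $Y_1$ is even-indexed, hence is either an interior vertex with both (odd-indexed) neighbours in $Y_1$, or the vertex $v_t$ (when $t$ is even) whose unique neighbour $v_{t-1}$ lies in $Y_1$; in both cases condition~(\ref{alliaceCondition}) holds, and symmetrically for $Y_2$. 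Theorem~\ref{partition-two-global} then yields $\gamma_o(K_r\Box P_t)\le\lfloor rt/2\rfloor$.

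For the lower bound (assuming $t\ge 2$, so that the statement is meaningful), let $S$ be a $\gamma_o(K_r\Box P_t)$-set, write $V(K_r)=\{u_1,\dots,u_r\}$ and $V(P_t)=\{v_1,\dots,v_t\}$, let $V_j$ be the vertex set of the $v_j$-fiber, and put $a_j=|S\cap V_j|$, so that $|S|=\sum_{j=1}^t a_j$. I would prove two facts. Fact (i): $a_1\ge\lfloor r/2\rfloor$ and $a_t\ge\lfloor r/2\rfloor$. Fact (ii): $a_j+a_{j+1}\ge r$ for every $j\in\{1,\dots,t-1\}$.

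Both facts follow from the same local computation. For (ii), suppose $a_j+a_{j+1}\le r-1$; then $|\{i:(u_i,v_j)\notin S\}|+|\{i:(u_i,v_{j+1})\notin S\}|=2r-a_j-a_{j+1}\ge r+1$, so these two subsets of $\{1,\dots,r\}$ share an index $i$, producing adjacent vertices $(u_i,v_j),(u_i,v_{j+1})$ both outside $S$. Applying~(\ref{alliaceCondition}) to $(u_i,v_j)$: since $(u_i,v_{j+1})\notin S$, the only possible $S$-neighbour of $(u_i,v_j)$ besides the $a_j$ vertices of $S\cap V_j$ is $(u_i,v_{j-1})$, so $\delta_S(u_i,v_j)\le a_j+1$, while the $r-1-a_j$ vertices of $V_j$ outside $S\cup\{(u_i,v_j)\}$ together with $(u_i,v_{j+1})$ give $\delta_{\overline S}(u_i,v_j)\ge r-a_j$; hence $a_j+1\ge(r-a_j)+1$, i.e.\ $a_j\ge\lceil r/2\rceil$. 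Symmetrically $a_{j+1}\ge\lceil r/2\rceil$, so $a_j+a_{j+1}\ge 2\lceil r/2\rceil\ge r$, a contradiction. The argument is uniform in $j$: the "good" neighbour $(u_i,v_{j+1})$ always exists for $j\le t-1$, and when $j=1$ the absent path-neighbour of $(u_i,v_1)$ only decreases $\delta_S$, strengthening the bound. Fact (i) is the same computation applied to a vertex $(u_i,v_1)\notin S$ (if $a_1=r$ there is nothing to prove): $\delta_S(u_i,v_1)\le a_1+1$ and $\delta_{\overline S}(u_i,v_1)\ge r-1-a_1$ give $2a_1\ge r-1$, that is $a_1\ge\lfloor r/2\rfloor$, and likewise for $a_t$.

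To finish, I would sum (ii): $\sum_{j=1}^{t-1}(a_j+a_{j+1})=2|S|-a_1-a_t\ge(t-1)r$, hence by (i), $2|S|\ge(t-1)r+a_1+a_t\ge(t-1)r+2\lfloor r/2\rfloor=tr-(r\bmod 2)$. Since $|S|$ is an integer, a one-line parity check (when $r$ is odd and $tr$ is even, the even number $2|S|\ge tr-1$ forces $2|S|\ge tr$) gives $|S|\ge\lfloor rt/2\rfloor$ in all cases; combined with the upper bound this proves $\gamma_o(K_r\Box P_t)=\lfloor rt/2\rfloor$. The main obstacle is Fact (ii): Theorem~\ref{partition-two-global} supplies only the upper bound, and the ready-made lower bounds from Theorem~\ref{ProdGxPn} and Theorem~\ref{GporCiclos} (applied with $G=K_r$ and $\gamma_o(K_r)=\lceil r/2\rceil$) fall short of $\lfloor rt/2\rfloor$ once $t$ grows; so the real work is showing that any two consecutive fibers jointly contain at least $r$ vertices of $S$, and arranging for the boundary fibers to be handled by the same computation.
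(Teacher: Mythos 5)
Your proposal is correct and follows essentially the same route as the paper: the upper bound via Theorem~\ref{partition-two-global} applied to the odd/even bipartition of $P_t$, and the lower bound by showing that the end fibers each contain at least $\left\lfloor\frac{r}{2}\right\rfloor$ vertices of $S$ and that any two consecutive fibers jointly contain at least $r$, then summing. Your version of the consecutive-fiber step (pigeonhole to find a column with two consecutive missing vertices, then the degree count forcing $a_j\ge\left\lceil\frac{r}{2}\right\rceil$) is in fact stated a little more carefully than the paper's, but it is the same argument.
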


\begin{proof}
Since $P_t$ is partitionable into two global offensive alliances, by Theorem \ref{partition-two-global} we obtain the upper bound $\gamma_o(K_r\Box P_t)\le \left\lfloor\frac{rt}{2}\right\rfloor.$

On the other hand, let $S$ be a $\gamma_o(K_r\Box P_t)$-set. Let $\{u_1,u_2,...,u_{r}\}$  and $\{v_1,v_2,...,v_{t}\}$ be the sets of vertices of $K_r$ and $P_t$, respectively. In $P_t$ adjacent vertices have consecutive subscripts.
 Now, let $V_j$ be the vertex set of the $v_j$-fiber and  let $S_j=P_{K_r}(S\cap V_j)$.  We first note that since $S$ is a global offensive alliance in $K_r\Box P_t$,  for every $(u_i,v_1)\in V_1$ it follows
   $1+|S_1|\ge \delta_S(u_i,v_1)\ge \frac{r+1}{2}$, so $|S_1|\ge \left\lceil\frac{r-1}{2}\right\rceil=\left\lfloor\frac{r}{2}\right\rfloor$. Analogously,  $|S_t|\ge \left\lfloor\frac{r}{2}\right\rfloor$. Now, suppose there exists $j\in \{1,...,t-1\}$ such that $|S_j\cup S_{j+1}|< r$. In such a case, $|S_j|\le \frac{r-1}{2}$ or $|S_{j+1}|\le \frac{r-1}{2}$, and there exist $(u_i,v_j),(u_i,v_{j+1})\not\in S.$ We take, without loss of generality, $|S_j|\le \frac{r-1}{2}$. Thus,
 $$1+\frac{r-1}{2}\ge \delta_S(u_i,v_j)\ge \delta_{\overline{S}}(u_i,v_j)+1\ge \frac{r-1}{2}+2,$$
 a contradiction. Hence,  for every $j\in \{1,2,...,t-1\}$, $|S_j\cup S_{j+1}|\ge r$.  As a consequence
$$2|S|=\sum_{j=1}^{{t}}| S_j|\ge r (t-1)+|S_1|+|S_t|\ge r (t-1)+2\left\lfloor\frac{r}{2}\right\rfloor.$$
Thus, $$\gamma_o(K_r\Box P_t)\ge  \left\lceil\frac{r (t-1)}{2}\right\rceil+\left\lfloor\frac{r}{2}\right\rfloor=\left\lfloor\frac{rt}{2}\right\rfloor.$$
Therefore, the proof is complete.
\end{proof}

\begin{corollary}
For any complete graph $K_r$ and any path graph $P_t$,
$$\gamma_o(K_{r}\Box P_{t})\ge \gamma_o(K_r)\gamma_o(P_t).$$
\end{corollary}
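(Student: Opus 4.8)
The plan is to reduce the claimed inequality to an elementary estimate between floor and ceiling functions, using the exact value $\gamma_o(K_r\Box P_t)=\left\lfloor\frac{rt}{2}\right\rfloor$ obtained in Proposition~\ref{bamboo} together with the values of $\gamma_o(K_r)$ and $\gamma_o(P_t)$.

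First I would compute $\gamma_o(K_r)$. If $S$ is a global offensive alliance in $K_r$ and $v\in\overline{S}$, then $v$ is adjacent to every other vertex, so $\delta_S(v)=|S|$ and $\delta_{\overline{S}}(v)=r-1-|S|$; condition~(\ref{alliaceCondition}) gives $|S|\ge r-|S|$, i.e. $|S|\ge\left\lceil\frac{r}{2}\right\rceil$, and conversely any set of $\left\lceil\frac{r}{2}\right\rceil$ vertices of $K_r$ is readily checked to satisfy~(\ref{alliaceCondition}), so $\gamma_o(K_r)=\left\lceil\frac{r}{2}\right\rceil$. Since $\gamma_o(P_t)=\left\lfloor\frac{t}{2}\right\rfloor$ and, by Proposition~\ref{bamboo}, $\gamma_o(K_r\Box P_t)=\left\lfloor\frac{rt}{2}\right\rfloor$, the corollary is equivalent to the inequality $\left\lfloor\frac{rt}{2}\right\rfloor\ge\left\lceil\frac{r}{2}\right\rceil\left\lfloor\frac{t}{2}\right\rfloor$.

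To verify this I would split according to the parity of $t$. If $t=2k$, then $\left\lfloor\frac{rt}{2}\right\rfloor=rk\ge\left\lceil\frac{r}{2}\right\rceil k=\left\lceil\frac{r}{2}\right\rceil\left\lfloor\frac{t}{2}\right\rfloor$, because $\left\lceil\frac{r}{2}\right\rceil\le r$. If $t=2k+1$, then $\left\lfloor\frac{rt}{2}\right\rfloor=rk+\left\lfloor\frac{r}{2}\right\rfloor$ and $\left\lceil\frac{r}{2}\right\rceil\left\lfloor\frac{t}{2}\right\rfloor=\left\lceil\frac{r}{2}\right\rceil k$, so the required inequality rearranges to $\left(r-\left\lceil\frac{r}{2}\right\rceil\right)k+\left\lfloor\frac{r}{2}\right\rfloor\ge 0$, that is $\left\lfloor\frac{r}{2}\right\rfloor(k+1)\ge 0$, which is obvious. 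This completes the argument.

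There is essentially no obstacle here: the real work has already been carried out in Proposition~\ref{bamboo}, and what remains is short parity bookkeeping. The only point worth a remark is that the inequality is typically far from tight; for instance when $r$ and $t$ are both even one gets $\gamma_o(K_r\Box P_t)=2\,\gamma_o(K_r)\,\gamma_o(P_t)$, whereas equality in the corollary holds only in the degenerate case $r=1$.
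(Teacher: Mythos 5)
Your proof is correct and follows essentially the same route the paper intends: the corollary is a direct consequence of Proposition~\ref{bamboo} combined with $\gamma_o(K_r)=\left\lceil \frac{r}{2}\right\rceil$ and $\gamma_o(P_t)=\left\lfloor \frac{t}{2}\right\rfloor$, reducing everything to the elementary inequality $\left\lfloor \frac{rt}{2}\right\rfloor\ge\left\lceil \frac{r}{2}\right\rceil\left\lfloor \frac{t}{2}\right\rfloor$, which your parity check verifies correctly.
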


We recall that a set $S$ of vertices of a graph $H$ is a global strong offensive alliance if for every $x\in \overline{S}$, $\delta_S(x)\ge \delta_{\overline{S}}(x)$.
Note that every global offensive alliance is a global strong offensive alliance. It was shown in \cite{yero1} that every graph without isolated vertices is partitionable into global strong offensive alliances.

\begin{theorem}\label{partitionable}
Let $G$  be a graph partitionable into two global offensive alliances $X_1$ and $ X_2$ and let $H$ be a graph partitionable into two global strong offensive alliances $Y_1$ and $Y_2$. Then
$$\gamma_o(G\Box H)\le  |X_1||Y_1|+|X_2||Y_2|.$$
\end{theorem}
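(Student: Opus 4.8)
The plan is to imitate the successful construction from Theorem~\ref{partition-two-global}: take the "cross" set $S=(X_1\times Y_1)\cup(X_2\times Y_2)$ and verify that it is a global offensive alliance in $G\Box H$. Its cardinality is exactly $|X_1||Y_1|+|X_2||Y_2|$, so the whole content is the verification of the alliance condition~(\ref{alliaceCondition}) at every vertex of $\overline{S}=(X_1\times Y_2)\cup(X_2\times Y_1)$.

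First I would set up the degree bookkeeping. For a vertex $(u,v)$ with $u\in X_a$ and $v\in Y_b$, a neighbor in $G\Box H$ either lies in the same $H$-fiber (so it is $(u',v)$ with $u'\sim_G u$) or in the same $G$-fiber (so it is $(u,v')$ with $v'\sim_H v$). Hence $\delta_S(u,v)$ and $\delta_{\overline S}(u,v)$ split as sums of a "$G$-part'' and an "$H$-part''. Concretely, for $(u,v)\in X_1\times Y_2$ one computes
\begin{align*}
\delta_S(u,v)&=\delta_{X_1}(u)+\delta_{Y_1}(v),\\
\delta_{\overline S}(u,v)&=\delta_{X_2}(u)+\delta_{Y_2}(v),
\end{align*}
and symmetrically for $(u,v)\in X_2\times Y_1$. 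So the alliance inequality at $(u,v)\in X_1\times Y_2$ becomes
$$\delta_{X_1}(u)+\delta_{Y_1}(v)\ \ge\ \delta_{X_2}(u)+\delta_{Y_2}(v)+1.$$
Now I would feed in the two hypotheses. Since $\{X_1,X_2\}$ partitions $V(G)$ into global offensive alliances, the vertex $u\in X_1\subseteq\overline{X_2}$ satisfies the offensive condition with respect to $X_2$, i.e. $\delta_{X_2}(u)\ge\delta_{X_1}(u)+1$; equivalently $\delta_{X_1}(u)\le\delta_{X_2}(u)-1$. Wait --- this goes the wrong way, so the clean rearrangement is instead: $\delta_{X_1}(u)-\delta_{X_2}(u)\le -1$. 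For the $H$-part, $v\in Y_2\subseteq\overline{Y_1}$ and $Y_1$ is a global \emph{strong} offensive alliance, so $\delta_{Y_1}(v)\ge\delta_{Y_2}(v)$, i.e. $\delta_{Y_1}(v)-\delta_{Y_2}(v)\ge 0$. Adding these, $(\delta_{X_1}(u)+\delta_{Y_1}(v))-(\delta_{X_2}(u)+\delta_{Y_2}(v))\le -1+(\text{something}\ge 0)$, which unfortunately need not be $\ge 1$. This tells me the naive pairing of $X_1$ with $Y_1$ may not be the right one, and the main obstacle is choosing \emph{which} alliance of $G$ to pair with \emph{which} strong alliance of $H$ so that both inequalities push in the same direction: we need the $G$-part to \emph{gain} $+1$ and the $H$-part to at worst break even. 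So I would pair so that at a point $(u,v)\in\overline S$, the $G$-coordinate lies in the \emph{complement} of the $S$-part in that fiber — i.e. take $S=(X_1\times Y_1)\cup(X_2\times Y_2)$ and re-examine: at $(u,v)\in X_1\times Y_2$, is it $\delta_S=\delta_{X_2}(u)+\delta_{Y_1}(v)$? No: neighbors in the $H$-fiber of $(u,v)$ are $(u,v')$, and these lie in $S$ iff $u\in X_1$ and $v'\in Y_1$, contributing $\delta_{Y_1}(v)$; neighbors in the $G$-fiber are $(u',v)$, in $S$ iff $u'\in X_1$ and $v\in Y_2$ --- but $v\notin Y_1$, so these contribute $0$, not $\delta_{X_2}(u)$. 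Hmm, so actually $\delta_S(u,v)=\delta_{Y_1}(v)$ and $\delta_{\overline S}(u,v)=\delta_{X_1}(u)+\delta_{X_2}(u)+\delta_{Y_2}(v)=d_G(u)+\delta_{Y_2}(v)$, which fails badly. Hence the correct product set must place, in every fiber, a genuine alliance; the right choice (as in Theorem~\ref{partition-two-global}) is $S=(X_1\times Y_1)\cup(X_2\times Y_2)$ only when the \emph{first} factor contributes via its \emph{complement alliance}. I would therefore carry out the check with the orientation: at $(u,v)\in X_1\times Y_2$, $\delta_S(u,v)=\delta_{X_1\cap N(u)}$ from the $G$-fiber is wrong again — let me instead just trust the structure and state the two cases, using $u\in X_1$ so $\delta_{X_2}(u)\ge \delta_{X_1}(u)+1$, and $v\in Y_2$ so $\delta_{Y_1}(v)\ge\delta_{Y_2}(v)$, and arrange the product so that the $S$-neighbors of $(u,v)$ come from $X_2$-direction and $Y_1$-direction; that forces $S\supseteq X_2\times\{v\}$-type sets, contradiction with $|S|$ small.

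Given the delicacy above, the honest plan is: (1) fix the pairing $S=(X_1\times Y_1)\cup(X_2\times Y_2)$; (2) for $(u,v)\in X_1\times Y_2$ compute $\delta_S(u,v)=\delta_{Y_1}(v)+0$ and $\delta_{\overline S}(u,v)$, realize this forces us instead to use the \emph{other} cross set or to use both alliance properties of $X_1,X_2$ and both strong-alliance properties of $Y_1,Y_2$ simultaneously by splitting $\overline S$ into its two blocks and, in the block $X_1\times Y_2$, writing $\delta_S(u,v)=\delta_{X_1}(u)+\delta_{Y_2}(v)$ — no. I will therefore present the verification exactly parallel to Theorem~\ref{partition-two-global}: check $(u,v)\in X_1\times Y_2$ via $\delta_S(u,v)=\delta_{X_2}(u)+\delta_{Y_1}(v)$, which holds because in fiber direction $G$ the $S$-vertices over $v\in Y_2$ are those with first coordinate in... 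I concede the bracket-chasing is exactly the subtle point and would work it out carefully on scratch paper; the \textbf{main obstacle} is precisely matching the offensive (strict, $+1$) inequality from $G$ with the strong (non-strict) inequality from $H$ so their sum still yields the required $+1$, which works only because $G$ supplies the strict gain and $H$ only needs to avoid a loss. (3) Conclude $S$ is a global offensive alliance, hence $\gamma_o(G\Box H)\le |S|=|X_1||Y_1|+|X_2||Y_2|$.
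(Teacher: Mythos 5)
Your construction $S=(X_1\times Y_1)\cup(X_2\times Y_2)$ is exactly the one the paper uses, and your closing intuition --- that the strict $+1$ must come from the offensive alliances of $G$ while the strong offensive alliances of $H$ only need to break even --- is also the right one. But the proof is not complete: the entire content of the theorem is the degree computation, you get it wrong twice in two different ways, and you explicitly concede it at the end. For $(u,v)\in X_1\times Y_2$ the correct value is
$$\delta_S(u,v)=\delta_{X_2}(u)+\delta_{Y_1}(v).$$
Indeed, a neighbor of the form $(u',v)$ has second coordinate $v\in Y_2$, so it belongs to $S$ if and only if it lies in $X_2\times Y_2$, i.e.\ if and only if $u'\in X_2$; a neighbor of the form $(u,v')$ has first coordinate $u\in X_1$, so it belongs to $S$ if and only if $v'\in Y_1$. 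Your first computation ($\delta_{X_1}(u)+\delta_{Y_1}(v)$) gets the $G$-direction summand wrong, and your second attempt ($\delta_S(u,v)=\delta_{Y_1}(v)$ with $\delta_{\overline{S}}(u,v)=d_G(u)+\delta_{Y_2}(v)$) forgets that $S$ also contains $X_2\times Y_2$, so the neighbors $(u',v)$ with $u'\in X_2$ do count toward $\delta_S$. These two miscounts are what led you to believe the pairing was broken when it is not.

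With the correct value the argument closes immediately: since $X_2$ is a global offensive alliance in $G$ and $u\in X_1=\overline{X_2}$, we have $\delta_{X_2}(u)\ge\delta_{X_1}(u)+1$; since $Y_1$ is a global strong offensive alliance in $H$ and $v\in Y_2=\overline{Y_1}$, we have $\delta_{Y_1}(v)\ge\delta_{Y_2}(v)$. Adding gives $\delta_S(u,v)\ge\delta_{X_1}(u)+\delta_{Y_2}(v)+1=\delta_{\overline{S}}(u,v)+1$, and the case $(u,v)\in X_2\times Y_1$ is symmetric, using that $X_1$ is a global offensive alliance and $Y_2$ is a global strong offensive alliance. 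So the approach and the key insight are correct and the fix is one line, but as written the verification fails and the proof does not stand on its own.
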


\begin{proof}
 Let $U=\{u_1,u_2,...,u_{n_1}\}$  and $V=\{v_1,v_2,...,v_{n_2}\}$ be the sets of vertices of $G$ and $H$, respectively, where
  $X_1=\{u_1,u_2,...,u_{|X_1|}\}$,
 $X_2=\{u_{|X_1|+1},u_{|X_1|+2},...,u_{n_1}\}$, $Y_1=\{v_{1},v_{2},...,v_{|Y_1|}\}$  and    $Y_2=\{v_{|Y_1|+1},v_{|Y_1|+2},...,v_{n_2}\}$. Let us show that $S=(X_1\times Y_1)\cup (X_2\times Y_2)$ is a global offensive alliance in $G\Box H$.
If $(u,v)\in X_1\times Y_2$, then
\begin{align*}
   \delta_S(u,v)&=\delta_{X_2}(u)+\delta_{Y_1}(v)\\
   &\ge\delta_{X_1}(u)+\delta_{Y_2}(v)+1\\
   &\ge\delta_{\overline{S}}(u,v)+1.
 \end{align*}

Now, if $(u,v)\in X_2\times Y_1$, then
\begin{align*}
   \delta_S(u,v)&=\delta_{X_1}(u)+\delta_{Y_2}(v)\\
   &\ge \delta_{X_2}(u)+\delta_{Y_1}(v)+1\\
   &\ge \delta_{\overline{S}}(u,v)+1.
 \end{align*}
 Hence,  $S=(X_1\times Y_1)\cup (X_2\times Y_2)$ is a global offensive alliance in $G\Box H$.
 The proof is complete.
\end{proof}

The proof of the following result is completely analogous to the proof of  Theorem \ref{partitionable}.
\begin{theorem}\label{partition-glob-strong}
Let $G$  be a graph partitionable into a global offensive alliance $X_1$ and a global strong offensive alliance $ X_2$.  Let $H$ be a graph partitionable into a  global offensive alliance $Y_1$ and a global strong offensive alliance $Y_2$. Then
$$\gamma_o(G\Box H)\le  |X_1||Y_1|+|X_2||Y_2|.$$
\end{theorem}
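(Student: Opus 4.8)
The plan is to reuse, essentially verbatim, the construction from the proof of Theorem~\ref{partitionable}. Write $U$ and $V$ for the vertex sets of $G$ and $H$, where $X_1$ is the global offensive alliance and $X_2$ the global strong offensive alliance partitioning $U$, and $Y_1$ is the global offensive alliance and $Y_2$ the global strong offensive alliance partitioning $V$. Set $S=(X_1\times Y_1)\cup(X_2\times Y_2)$; this set is nonempty since each of the four classes is nonempty. It then suffices to verify that $S$ satisfies the global offensive alliance condition~(\ref{alliaceCondition}) at every vertex of $\overline{S}=(X_1\times Y_2)\cup(X_2\times Y_1)$, because then $\gamma_o(G\Box H)\le|S|=|X_1||Y_1|+|X_2||Y_2|$.

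First I would treat a vertex $(u,v)\in X_1\times Y_2$. A neighbour $(u',v)$ of $(u,v)$ lies in $S$ exactly when $u'\in X_2$, and a neighbour $(u,v')$ lies in $S$ exactly when $v'\in Y_1$; hence $\delta_S(u,v)=\delta_{X_2}(u)+\delta_{Y_1}(v)$ and dually $\delta_{\overline S}(u,v)=\delta_{X_1}(u)+\delta_{Y_2}(v)$. Since $u\in\overline{X_2}$ and $X_2$ is a global strong offensive alliance in $G$, we have $\delta_{X_2}(u)\ge\delta_{X_1}(u)$; since $v\in\overline{Y_1}$ and $Y_1$ is a global offensive alliance in $H$, we have $\delta_{Y_1}(v)\ge\delta_{Y_2}(v)+1$. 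Adding these two inequalities gives $\delta_S(u,v)\ge\delta_{\overline S}(u,v)+1$, as required. The symmetric case $(u,v)\in X_2\times Y_1$ is handled identically with the two factors interchanged: here $\delta_S(u,v)=\delta_{X_1}(u)+\delta_{Y_2}(v)$ and $\delta_{\overline S}(u,v)=\delta_{X_2}(u)+\delta_{Y_1}(v)$, the strict ``$+1$'' is now supplied by $X_1$ being a global offensive alliance in $G$ (so $\delta_{X_1}(u)\ge\delta_{X_2}(u)+1$), and $Y_2$ being a global strong offensive alliance in $H$ supplies the non-strict bound $\delta_{Y_2}(v)\ge\delta_{Y_1}(v)$; adding yields $\delta_S(u,v)\ge\delta_{\overline S}(u,v)+1$ again.

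There is no serious obstacle; the only point requiring care is keeping track of which of the four classes supplies the strict inequality in each of the two off-diagonal blocks $X_1\times Y_2$ and $X_2\times Y_1$. The design of $S$ makes this automatic: in each block, for exactly one of the two factors the relevant coordinate lies outside that factor's \emph{global offensive} alliance class (contributing the extra $+1$ via~(\ref{alliaceCondition})), while for the other factor it lies outside that factor's \emph{global strong offensive} alliance class (contributing only a non-strict bound), and the two contributions add to give precisely the global offensive alliance condition. Hence $S$ is a global offensive alliance in $G\Box H$, which completes the argument.
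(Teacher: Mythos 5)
Your proof is correct and is exactly the argument the paper intends: the paper omits the details, stating only that the proof is ``completely analogous'' to that of Theorem~\ref{partitionable}, and your verification of the two off-diagonal blocks $X_1\times Y_2$ and $X_2\times Y_1$ --- with the strict ``$+1$'' supplied by $Y_1$ in the first block and by $X_1$ in the second --- is precisely that analogous computation, carried out correctly.
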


A bipartite graph $G=(X_1\cup X_2, E)$, where the sets of the bipartition have cardinality $|X_1|=x_1$ and $|X_2|=x_2$ is called a $(x_1,x_2)$-bipartite graph.

\begin{corollary}
Let $G$ be a $(p_1,p_2)$-bipartite graph and let $H$ be a $(t_1,t_2)$-bipartite graph. Then
$$\gamma_o(G\Box H)\le  p_1t_1+p_2t_2.$$
\end{corollary}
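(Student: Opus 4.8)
The plan is to derive this as an immediate consequence of Theorem~\ref{partition-glob-strong} by exhibiting suitable partitions of a bipartite graph into a global offensive alliance and a global strong offensive alliance. First I would recall that in a bipartite graph $G=(X_1\cup X_2,E)$ every vertex of $X_1$ has all its neighbors in $X_2$ and vice versa; hence for a vertex $v\in X_2$ we have $\delta_{X_1}(v)=d(v)$ and $\delta_{X_2}(v)=0$, so the inequality $\delta_{X_1}(v)\ge\delta_{X_2}(v)+1$ holds precisely when $d(v)\ge 1$, and $\delta_{X_1}(v)\ge\delta_{X_2}(v)$ holds trivially. Thus, \emph{provided $G$ has no isolated vertices}, the set $X_1$ is a global offensive alliance in $G$ (the complement is $X_2$, and each vertex of $X_2$ has at least one neighbor in $X_1$ and none outside), and symmetrically $X_2$ is also a global offensive alliance; in particular $\{X_1,X_2\}$ is a partition into a global offensive alliance and a (global strong) offensive alliance as required by Theorem~\ref{partition-glob-strong}.

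Concretely, I would argue: let $G$ be a $(p_1,p_2)$-bipartite graph with parts $X_1,X_2$ of sizes $p_1,p_2$, and let $H$ be a $(t_1,t_2)$-bipartite graph with parts $Y_1,Y_2$ of sizes $t_1,t_2$. Taking $X_1$ as a global offensive alliance of $G$ and $X_2$ as a global strong offensive alliance of $G$ (both valid by the bipartite observation above), and similarly $Y_1$ as a global offensive alliance of $H$ and $Y_2$ as a global strong offensive alliance of $H$, Theorem~\ref{partition-glob-strong} yields $\gamma_o(G\Box H)\le |X_1||Y_1|+|X_2||Y_2|=p_1t_1+p_2t_2$, which is exactly the claimed bound.

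The main subtlety — and the step I expect to need a word of care — is the treatment of isolated vertices. If $G$ has an isolated vertex $v\in X_2$, then that vertex lies in $\overline{X_1}$ but has no neighbor in $X_1$, so $X_1$ fails to be a global offensive alliance (and indeed fails to be dominating). The same issue arises for $H$. I would handle this either by adding the standing hypothesis that $G$ and $H$ have no isolated vertices (the natural reading, consistent with the preceding results in the section, which implicitly work with graphs without isolated vertices), or by noting that an isolated vertex can simply be absorbed: place it in whichever part one wishes and observe it imposes no alliance condition on the other factor's vertices in the product beyond what is already there — but the cleanest route is to state the corollary for graphs without isolated vertices. Modulo that hypothesis, no further computation is needed: everything is a direct substitution into Theorem~\ref{partition-glob-strong} once the bipartite parts are recognized as the desired alliances.
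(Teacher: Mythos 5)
Your proof is correct and is exactly the argument the paper intends: the corollary is stated as a direct consequence of Theorem~\ref{partition-glob-strong}, using that in a bipartite graph each part is a global strong offensive alliance unconditionally and a global offensive alliance whenever the opposite part has no isolated vertices. Your remark about isolated vertices is a legitimate caveat that the paper leaves implicit (its surrounding results assume connectedness), so adding that hypothesis is the right call.
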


We recall that the hypercube graphs are defined as $Q_k=Q_{k-1}\Box K_2$, $k\ge 2$, where $Q_1=K_2$. Note that $Q_{k-1}$ is a $(2^{k-2},2^{k-2})$-bipartite graph and $K_2$ is a $(1,1)$-bipartite graph. Moreover, the Laplacian spectral radius of $Q_k$ is $\lambda=2k$. Hence,    from the above corollary and (\ref{lemma-cota-glob}) we have $$\left\lceil\left\lceil\frac{k+1}{2}\right\rceil \frac{2^{k-1}}{k}\right\rceil\le \gamma_o(Q_k)\le 2^{k-1}.$$


\begin{proposition}\label{CiclosporCiclos}
The global offensive alliance number of the torus graph $C_r\Box C_t$ is $$\gamma_o(C_{r}\Box C_{t})=\left\lceil\frac{rt}{2}\right\rceil.$$
\end{proposition}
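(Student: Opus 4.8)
The plan is to prove the two inequalities $\gamma_o(C_r\Box C_t)\le\lceil rt/2\rceil$ and $\gamma_o(C_r\Box C_t)\ge\lceil rt/2\rceil$ separately, the second being where the real work lies.

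For the upper bound I would feed Theorem~\ref{partition-glob-strong} with a convenient partition of each factor. For any $n\ge 3$ let $X_1$ be a minimum vertex cover of $C_n$ and $X_2=\overline{X_1}$. Then $X_1$ is a global offensive alliance, since every vertex outside $X_1$ has both of its neighbours in $X_1$, so $\delta_{X_1}(v)=2\ge\delta_{\overline{X_1}}(v)+1$; and $X_2$ is a maximum independent set of $C_n$, hence a dominating set, hence a global strong offensive alliance because $C_n$ is $2$-regular. Moreover $|X_1|=n-\alpha(C_n)=\lceil n/2\rceil$ and $|X_2|=\lfloor n/2\rfloor$. Applying Theorem~\ref{partition-glob-strong} to $G=C_r$ and $H=C_t$ gives
$$\gamma_o(C_r\Box C_t)\le\left\lceil\frac r2\right\rceil\left\lceil\frac t2\right\rceil+\left\lfloor\frac r2\right\rfloor\left\lfloor\frac t2\right\rfloor,$$
and a one-line parity check shows the right-hand side equals $rt/2$ when $rt$ is even and $(rt+1)/2$ when $r$ and $t$ are both odd, i.e. it equals $\lceil rt/2\rceil$ in all cases. (When $r$ and $t$ are both even one may instead quote Corollary~\ref{bipartitos}.)

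For the lower bound, note first that the one-dimensional fibre estimate of Theorem~\ref{GporCiclos} applied to $C_r\Box C_t$ is of order $rt/4$ in the worst case (when a factor has even order), hence too weak, so a genuinely two-dimensional count is needed; this is the crux. Let $S$ be a $\gamma_o(C_r\Box C_t)$-set. Since $C_r\Box C_t$ is $4$-regular, condition~(\ref{alliaceCondition2}) forces $\delta_{\overline{S}}(v)\le 1$ for every $v\in\overline{S}$, i.e. $\overline{S}$ induces a subgraph of maximum degree at most $1$. For each pair $(i,j)$, with indices read modulo $r$ and modulo $t$, let $Q_{i,j}$ be the $4$-cycle on the vertices $(i,j),(i+1,j),(i+1,j+1),(i,j+1)$. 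In any $4$-cycle every set of three vertices contains a vertex adjacent to the other two; so if three vertices of $Q_{i,j}$ belonged to $\overline{S}$, one of them would have degree at least $2$ in $\overline{S}$, a contradiction. Hence $|S\cap Q_{i,j}|\ge 2$ for every $(i,j)$.

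Finally I would double-count the incidences between the vertices of $S$ and the $rt$ squares $Q_{i,j}$. Each vertex of $C_r\Box C_t$ belongs to exactly four of these squares, so
$$4|S|=\sum_{i,j}|S\cap Q_{i,j}|\ge 2rt,$$
whence $|S|\ge rt/2$ and therefore $|S|\ge\lceil rt/2\rceil$. Together with the upper bound this yields the claimed equality. The step I expect to be the main obstacle is discovering this $2\times 2$-window counting: once one observes that a global offensive alliance in a $4$-regular graph leaves a complement of maximum degree at most $1$, the argument is very short, but getting past the too-weak bound of Theorem~\ref{GporCiclos} requires introducing precisely this new local estimate rather than iterating the fibre argument.
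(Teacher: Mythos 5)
Your proof is correct. The upper bound is exactly the paper's: partition each cycle into a global offensive alliance of size $\left\lceil n/2\right\rceil$ and a global strong offensive alliance of size $\left\lfloor n/2\right\rfloor$ and apply Theorem~\ref{partition-glob-strong}; your identification of these sets as a minimum vertex cover and a maximum independent set, and your parity check of $\left\lceil r/2\right\rceil\left\lceil t/2\right\rceil+\left\lfloor r/2\right\rfloor\left\lfloor t/2\right\rfloor=\left\lceil rt/2\right\rceil$, just make explicit what the paper asserts. For the lower bound you diverge: the paper works with pairs of consecutive fibers, showing (via $4$-regularity forcing $\delta_{\overline{S}}(v)\le 1$) that $|S_j|+|S_{j+1}|\ge r$ for every $j$ and summing over $j$ to get $2|S|\ge rt$, whereas you count incidences between $S$ and the $rt$ squares $Q_{i,j}$, using the fact that each square meets $S$ in at least two vertices (this is precisely the paper's Lemma~\ref{remark-4-2}, which you reprove) and that each vertex lies in exactly four squares, giving $4|S|\ge 2rt$. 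The two counts rest on the same local fact and give the same bound; your square count is the argument the paper itself uses for the grid and for the cylinder with $r$ odd, and on the torus it is arguably the cleaner of the two since there are no boundary fibers to treat separately and it sidesteps the paper's slightly abusive notation $|S_j\cup S_{j+1}|$ (which, after projection onto $C_r$, really must be read as $|S_j|+|S_{j+1}|$ for the displayed identity $2|S|=\sum_j|S_j\cup S_{j+1}|$ to hold). Your observation that Theorem~\ref{GporCiclos} alone only yields a bound of order $rt/4$ and that a two-dimensional local estimate is needed is also accurate.
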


\begin{proof}
Since every cycle $C_n$ can be partitioned into a global strong offensive alliance of cardinality $\left\lfloor\frac{n}{2}\right\rfloor$ and a  global offensive alliance of cardinality $\left\lceil\frac{n}{2}\right\rceil$, by Theorem \ref{partition-glob-strong} we have  $\gamma_o(C_{r}\Box C_{t})\le \left\lceil\frac{rt}{2}\right\rceil.$

On the other hand, let $S$ be a $\gamma_o(C_r\Box C_t)$-set. Let $\{u_0,u_1,...,u_{r-1}\}$  and $\{v_0,v_1,...,v_{t-1}\}$ be the sets of vertices of $C_r$ and $C_t$, respectively. Here adjacent vertices have consecutive subscripts, where the subscripts are taken modulo $r$ and $t$, respectively. As above, let $V_j$ be the vertex set of the $v_j$-fiber and  let $S_j=P_{C_r}(S\cap V_j)$.  Let $(u_i,v_j)$ be a vertex not belonging to $S$. Since  $C_r \Box C_t$ is a $4$-regular graph and $S$ is a global offensive alliance, if $(u_{i+1},v_j)\not\in S$, then $(u_i,v_{j+1}), (u_{i+1},v_{j+1})\in S$, and if  $(u_i,v_{j+1})\not\in S$, then $(u_{i+1},v_j), (u_{i+1},v_{j+1})\in S$. Thus,  for every $j\in \{0,1,...,t-1\}$, $|S_j\cup S_{j+1}|\ge r$.  Hence,
$$2|S|=\sum_{j=0}^{{t-1}}| S_j\cup S_{j+1})|\ge r t.$$
Therefore, we have that $\gamma_o(C_r\Box C_t)\ge \left\lceil\frac{rt}{2}\right\rceil$ and the proof is complete.
\end{proof}

\begin{corollary}
For any torus graph $C_r\Box C_t$,   $\gamma_o(C_{r}\Box C_{t})\ge \gamma_o(C_r)\gamma_o(C_t).$
\end{corollary}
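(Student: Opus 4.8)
The plan is to combine the exact value $\gamma_o(C_r\Box C_t)=\left\lceil\frac{rt}{2}\right\rceil$ from Proposition~\ref{CiclosporCiclos} with the known formula $\gamma_o(C_n)=\left\lceil\frac{n}{2}\right\rceil$ (for $n\ge 4$, say), reducing the corollary to the purely arithmetic inequality $\left\lceil\frac{rt}{2}\right\rceil\ge\left\lceil\frac{r}{2}\right\rceil\left\lceil\frac{t}{2}\right\rceil$. First I would recall $\gamma_o(C_n)=\left\lceil\frac{n}{2}\right\rceil$ (this is a standard fact about offensive alliance numbers of cycles, and it can also be read off from the two-alliance partition used in Proposition~\ref{CiclosporCiclos}: a global offensive alliance in $C_n$ must dominate, forcing at least $\left\lceil\frac{n}{2}\right\rceil$ vertices, and the alliance exhibited there has exactly that size). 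Then the corollary becomes the statement $\left\lceil\frac{rt}{2}\right\rceil\ge\left\lceil\frac{r}{2}\right\rceil\left\lceil\frac{t}{2}\right\rceil$.

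To verify this arithmetic inequality I would split into cases by the parities of $r$ and $t$. If either $r$ or $t$ is even, then $\left\lceil\frac{rt}{2}\right\rceil=\frac{rt}{2}$ and, say $r$ even, $\left\lceil\frac{r}{2}\right\rceil\left\lceil\frac{t}{2}\right\rceil=\frac{r}{2}\left\lceil\frac{t}{2}\right\rceil\le\frac{r}{2}\cdot\frac{t+1}{2}=\frac{rt+r}{4}$, so it suffices to check $\frac{rt}{2}\ge\frac{rt+r}{4}$, i.e. $rt\ge r$, which holds since $t\ge 1$; the symmetric subcase is identical. If both $r$ and $t$ are odd, write $r=2a+1$, $t=2b+1$; then $\left\lceil\frac{r}{2}\right\rceil\left\lceil\frac{t}{2}\right\rceil=(a+1)(b+1)=ab+a+b+1$, while $rt=(2a+1)(2b+1)=4ab+2a+2b+1$ is odd, so $\left\lceil\frac{rt}{2}\right\rceil=\frac{rt+1}{2}=2ab+a+b+1$. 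Since $a,b\ge 1$ (as $r,t\ge 3$ for a cycle) we have $2ab\ge ab$, hence $2ab+a+b+1\ge ab+a+b+1$, which is exactly the desired inequality.

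Finally, assembling the pieces: for any cycles $C_r$, $C_t$ we have $\gamma_o(C_r\Box C_t)=\left\lceil\frac{rt}{2}\right\rceil\ge\left\lceil\frac{r}{2}\right\rceil\left\lceil\frac{t}{2}\right\rceil=\gamma_o(C_r)\gamma_o(C_t)$, which proves the Vizing-like inequality for the torus. I do not anticipate a genuine obstacle here; the only point requiring a little care is the parity bookkeeping in the both-odd case (and being careful that the cycle orders are at least $3$, so that $a,b\ge 1$ and the slack $2ab\ge ab$ is available — this is what makes the inequality work rather than being tight in a way that fails). The argument is essentially immediate once the exact formula of Proposition~\ref{CiclosporCiclos} is in hand.
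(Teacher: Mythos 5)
Your proposal is correct and coincides with the paper's (implicit) argument: the corollary is stated as an immediate consequence of Proposition~\ref{CiclosporCiclos}, and the ceiling inequality $\left\lceil\frac{rt}{2}\right\rceil\ge\left\lceil\frac{r}{2}\right\rceil\left\lceil\frac{t}{2}\right\rceil$ that you verify by parity cases is its whole content. One parenthetical slip worth fixing: domination alone only forces $\left\lceil\frac{n}{3}\right\rceil$ vertices in $C_n$; the correct reason that $\gamma_o(C_n)\ge\left\lceil\frac{n}{2}\right\rceil$ is that condition (\ref{alliaceCondition2}) forces both neighbours of any vertex of $\overline{S}$ to lie in $S$, so $\overline{S}$ is independent --- but the value $\gamma_o(C_n)=\left\lceil\frac{n}{2}\right\rceil$ you use is nonetheless correct.
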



\begin{proposition}\label{CompletosporCiclos}
The global offensive alliance number of the graph $K_r\Box C_t$ is $$\gamma_o(K_{r}\Box C_{t})=\left\lceil\frac{rt}{2}\right\rceil.$$
\end{proposition}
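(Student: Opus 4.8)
The plan is to prove the two inequalities $\gamma_o(K_r\Box C_t)\le\left\lceil\frac{rt}{2}\right\rceil$ and $\gamma_o(K_r\Box C_t)\ge\left\lceil\frac{rt}{2}\right\rceil$ separately, as in Propositions~\ref{bamboo} and~\ref{CiclosporCiclos}; the upper bound is a short partition argument, while the lower bound requires a fiber-by-fiber count that is more delicate than in those cases. For the upper bound I would apply Theorem~\ref{partition-glob-strong}. First, $K_r$ is partitionable into a global offensive alliance $X_1$ with $|X_1|=\left\lceil\frac{r}{2}\right\rceil$ and a global strong offensive alliance $X_2$ with $|X_2|=\left\lfloor\frac{r}{2}\right\rfloor$: take $X_1$ to be any $\left\lceil\frac{r}{2}\right\rceil$ vertices and $X_2$ the rest, so that the two alliance conditions reduce to $\left\lceil\frac{r}{2}\right\rceil\ge\left\lfloor\frac{r}{2}\right\rfloor$ and $\left\lfloor\frac{r}{2}\right\rfloor\ge\left\lceil\frac{r}{2}\right\rceil-1$. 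On the other factor, as used in the proof of Proposition~\ref{CiclosporCiclos}, $C_t$ is partitionable into a global offensive alliance $Y_1$ with $|Y_1|=\left\lceil\frac{t}{2}\right\rceil$ and a global strong offensive alliance $Y_2$ with $|Y_2|=\left\lfloor\frac{t}{2}\right\rfloor$. Theorem~\ref{partition-glob-strong} then gives
$$\gamma_o(K_r\Box C_t)\le |X_1|\,|Y_1|+|X_2|\,|Y_2|=\left\lceil\frac{r}{2}\right\rceil\left\lceil\frac{t}{2}\right\rceil+\left\lfloor\frac{r}{2}\right\rfloor\left\lfloor\frac{t}{2}\right\rfloor,$$
and checking the four parity cases of $(r,t)$ shows the right-hand side equals $\left\lceil\frac{rt}{2}\right\rceil$.

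For the lower bound, let $S$ be a $\gamma_o(K_r\Box C_t)$-set, let $V_j$ be the vertex set of the $v_j$-fiber, and set $a_j=|V_j\setminus S|$, so $|\overline S|=\sum_{j=0}^{t-1}a_j$. Since $K_r\Box C_t$ is $(r+1)$-regular, (\ref{alliaceCondition2}) forces $\delta_{\overline S}(v)\le\left\lfloor\frac{r}{2}\right\rfloor$ for every $v\in\overline S$; as each $V_j$ induces a copy of $K_r$, the $a_j$ vertices of $\overline S$ lying in $V_j$ are pairwise adjacent, whence $a_j-1\le\left\lfloor\frac{r}{2}\right\rfloor$, i.e.\ $a_j\le\left\lfloor\frac{r}{2}\right\rfloor+1$. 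Call $V_j$ \emph{full} when $a_j=\left\lfloor\frac{r}{2}\right\rfloor+1$. The key observation is that $a_j+a_{j'}\le r$ whenever $v_j\sim v_{j'}$ in $C_t$: if $V_j$ is full, each of its $\overline S$-vertices already has $\overline S$-degree $\left\lfloor\frac{r}{2}\right\rfloor$ inside $V_j$ and hence no $\overline S$-neighbour in $V_{j'}$, so at most $r-\left(\left\lfloor\frac{r}{2}\right\rfloor+1\right)=\left\lceil\frac{r}{2}\right\rceil-1$ positions of $V_{j'}$ may lie in $\overline S$, giving $a_{j'}\le\left\lceil\frac{r}{2}\right\rceil-1$ and $a_j+a_{j'}\le r$ (in particular two adjacent fibers cannot both be full); and if neither is full then $a_j,a_{j'}\le\left\lfloor\frac{r}{2}\right\rfloor$, so $a_j+a_{j'}\le 2\left\lfloor\frac{r}{2}\right\rfloor\le r$.

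Summing over the fibers completes the lower bound. If $t$ is even, partition $\{v_0,\dots,v_{t-1}\}$ into the adjacent pairs $\{v_0,v_1\},\{v_2,v_3\},\dots$ and add the inequalities $a_j+a_{j'}\le r$: this yields $|\overline S|\le\frac{t}{2}r$, hence $|S|=rt-|\overline S|\ge\frac{rt}{2}=\left\lceil\frac{rt}{2}\right\rceil$. If $t$ is odd, the full fibers form an independent set of $C_t$, so (as $t\ge 3$) at least one fiber $V_{j_0}$ is not full, with $a_{j_0}\le\left\lfloor\frac{r}{2}\right\rfloor$; the remaining $t-1$ fibers form a path and split into $\frac{t-1}{2}$ adjacent pairs, so $|\overline S|\le\left\lfloor\frac{r}{2}\right\rfloor+\frac{t-1}{2}r=\left\lfloor\frac{rt}{2}\right\rfloor$ (a direct computation using both parities of $r$), whence $|S|=rt-|\overline S|\ge\left\lceil\frac{rt}{2}\right\rceil$. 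Together with the upper bound this gives the claimed equality.

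The step I expect to be the main obstacle is the odd-$t$ case of the lower bound. The per-pair inequality $|S_j\cup S_{j+1}|\ge r$ that powers the proofs for $K_r\Box P_t$ and $C_r\Box C_t$ is no longer available: for instance, in $K_4\Box C_5$ the global offensive alliance $(X_1\times Y_1)\cup(X_2\times Y_2)$ produced by the upper-bound construction meets two adjacent fibers in the very same set $X_1$, so that union has size $2<4$, even though this alliance already has the extremal cardinality $10$. Hence the argument must be recast in terms of the fiber cardinalities $a_j$, exploiting that a full fiber forbids its indices from appearing in $\overline S$ in the neighbouring fibers and that on an odd cycle one can always keep a non-full fiber out of the pairing.
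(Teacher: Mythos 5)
Your proof is correct, and it is worth comparing carefully with the paper's. The upper bound is identical: the paper also invokes Theorem~\ref{partition-glob-strong} with the partitions of $K_r$ and $C_t$ into a global offensive alliance of cardinality $\left\lceil\frac{n}{2}\right\rceil$ and a global strong offensive alliance of cardinality $\left\lfloor\frac{n}{2}\right\rfloor$. For the lower bound, your key inequality $a_j+a_{j'}\le r$ for adjacent fibers is literally the complemented form of $|S_j|+|S_{j+1}|\ge r$, which is also the inequality the paper's contradiction argument actually establishes (via a column $u_i$ with $(u_i,v_j),(u_i,v_{j+1})\notin S$ and the offensive condition at $(u_i,v_j)$); so the key lemma is the same, but you reach it differently, through the regularity bound $\delta_{\overline S}(v)\le\left\lfloor\frac{r}{2}\right\rfloor$ and the analysis of full fibers. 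The more substantive difference is how the per-pair bound is summed: the paper sums over all $t$ cyclic adjacent pairs, counting each fiber twice, so that $2|S|\ge rt$ yields $|S|\ge\left\lceil\frac{rt}{2}\right\rceil$ immediately with no parity discussion; your disjoint pairing forces the extra odd-$t$ argument (existence of a non-full fiber contributing only $\left\lfloor\frac{r}{2}\right\rfloor$ to $|\overline S|$), which you carry out correctly but which the double count renders unnecessary. Finally, your observation that the \emph{union} inequality $|S_j\cup S_{j+1}|\ge r$ can fail for a minimum global offensive alliance is accurate and directly relevant: the paper's proof is in fact phrased in terms of $|S_j\cup S_{j+1}|$, and its inference that $|S_j\cup S_{j+1}|<r$ forces $|S_j|\le \frac{r-1}{2}$ or $|S_{j+1}|\le \frac{r-1}{2}$ is invalid exactly as your $K_4\Box C_5$ example shows (there $|S_4\cup S_0|=2<4$ while $|S_4|=|S_0|=2$); the paper's argument is repaired by reading the sum $|S_j|+|S_{j+1}|$ in place of the union throughout, which is all its final count uses. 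So your proposal not only proves the proposition but also diagnoses, and routes around, a genuine notational slip in the published argument.
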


\begin{proof}
Let $S$ be a $\gamma_o(K_r\Box C_t)$-set. Let $\{u_1,u_2,...,u_{r}\}$  and $\{v_0,v_1,...,v_{t-1}\}$ be the sets of vertices of $K_r$ and $C_t$, respectively. In $C_t$ the subscripts are taken modulo  $t$ and adjacent vertices have consecutive subscripts. As above, let $V_j$ be the vertex set of the $v_j$-fiber and  let $S_j=P_{K_r}(S\cap V_j)$.  Let $(u_i,v_j)$ be a vertex not belonging to $S$.

Now, suppose there exists $j\in \{0,1,...,t\}$ such that $|S_j\cup S_{j+1}|< r$. In such a case, $|S_j|\le \frac{r-1}{2}$ or $|S_{j+1}|\le \frac{r-1}{2}$, and there exist $(u_i,v_j),(u_i,v_{j+1})\not\in S.$ We take, without loss of generality, $|S_j|\le \frac{r-1}{2}$. Thus,
 $$1+\frac{r-1}{2}\ge \delta_S(u_i,v_j)\ge \delta_{\overline{S}}(u_i,v_j)+1\ge \frac{r-1}{2}+2,$$
 a contradiction. Hence,  for every $j\in \{1,2,...,t-1\}$, $|S_j\cup S_{j+1}|\ge r$ (the subscripts are taken modulo $t$).  As a consequence
$2|S|=\sum_{j=0}^{{t-1}}| S_j\cup S_{j+1}|\ge rt.$
Therefore, $\gamma_o(K_r\Box C_t)\ge  \left\lceil\frac{r t}{2}\right\rceil.$

Since every cycle graph $C_n$  (every complete graph $K_n$) can be partitioned into a global strong offensive alliance of cardinality $\left\lfloor\frac{n}{2}\right\rfloor$ and a  global offensive alliance of cardinality $\left\lceil\frac{n}{2}\right\rceil$, by Theorem \ref{partition-glob-strong} we have  $\gamma_o(K_{r}\Box C_{t})\le \left\lceil\frac{rt}{2}\right\rceil.$
Therefore, the proof is complete.
\end{proof}

\begin{corollary}
For any complete graph $K_r$ and any cycle graph $C_t$,
$$\gamma_o(K_{r}\Box C_{t})\ge \gamma_o(K_r)\gamma_o(C_t).$$
\end{corollary}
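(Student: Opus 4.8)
The plan is to reduce the statement to an elementary inequality between ceilings. First I would record the two single-factor values. For the complete graph, if $S$ is a global offensive alliance in $K_r$ and $v\in\overline{S}$, then $\delta_S(v)=|S|$ and $\delta_{\overline{S}}(v)=r-|S|-1$, so the alliance condition $\delta_S(v)\ge\delta_{\overline{S}}(v)+1$ becomes $2|S|\ge r$; hence $\gamma_o(K_r)=\left\lceil\frac{r}{2}\right\rceil$. For the cycle, which is $2$-regular, the alliance condition forces $\delta_S(v)=2$ for every $v\in\overline{S}$, so $\overline{S}$ is an independent set of $C_t$; since the maximum independent set of $C_t$ has size $\left\lfloor\frac{t}{2}\right\rfloor$, we get $\gamma_o(C_t)=\left\lceil\frac{t}{2}\right\rceil$ (and a set whose complement is a maximum independent set attains this).

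Next, by Proposition \ref{CompletosporCiclos} we have $\gamma_o(K_r\Box C_t)=\left\lceil\frac{rt}{2}\right\rceil$, so the claim is equivalent to
$$\left\lceil\frac{rt}{2}\right\rceil\ \ge\ \left\lceil\frac{r}{2}\right\rceil\left\lceil\frac{t}{2}\right\rceil.$$
I would prove this by a short parity case analysis. If $r$ is even, then $\left\lceil\frac{r}{2}\right\rceil\left\lceil\frac{t}{2}\right\rceil=\frac{r}{2}\left\lceil\frac{t}{2}\right\rceil\le\frac{r}{2}\,t=\frac{rt}{2}=\left\lceil\frac{rt}{2}\right\rceil$, using $\left\lceil\frac{t}{2}\right\rceil\le t$; the case $t$ even is symmetric. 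If both $r$ and $t$ are odd, then $rt$ is odd and the inequality reads $\frac{rt+1}{2}\ge\frac{(r+1)(t+1)}{4}$, i.e. $2rt+2\ge rt+r+t+1$, i.e. $(r-1)(t-1)\ge 0$, which holds.

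There is essentially no obstacle here: the only mildly nontrivial point is the both-odd case, and it collapses to $(r-1)(t-1)\ge 0$. Combining the three cases yields $\gamma_o(K_r\Box C_t)\ge\gamma_o(K_r)\gamma_o(C_t)$, completing the proof.
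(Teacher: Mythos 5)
Your proposal is correct and matches the paper's (implicit) argument: the corollary is stated as a direct consequence of Proposition \ref{CompletosporCiclos}, i.e.\ of $\gamma_o(K_r\Box C_t)=\left\lceil\frac{rt}{2}\right\rceil$ together with the standard values $\gamma_o(K_r)=\left\lceil\frac{r}{2}\right\rceil$ and $\gamma_o(C_t)=\left\lceil\frac{t}{2}\right\rceil$ and the elementary inequality $\left\lceil\frac{rt}{2}\right\rceil\ge\left\lceil\frac{r}{2}\right\rceil\left\lceil\frac{t}{2}\right\rceil$. Your parity case analysis, reducing the both-odd case to $(r-1)(t-1)\ge 0$, is exactly the verification the paper leaves to the reader.
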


 A {\em square} in a Cartesian product of two graphs $G$ and $H$ is a set of vertices of $G\Box H$ formed by four different vertices $(u_i,v_k)$, $(u_i,v_l)$, $(u_j,v_k)$, $(u_j,v_l)$ such that $u_i\sim u_j$ in $G$ and $v_k\sim v_l$ in $H$.

\begin{lemma}\label{remark-4-2}
Let $G$ and $H$ be two graphs such that they are cycles or paths and let $S$ be a $\gamma_o(G\Box H)$-set. For any square $A$ of $G\Box H$ it follows that $|S\cap A|\ge 2$.
\end{lemma}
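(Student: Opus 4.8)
The plan is to proceed by contradiction, using that $G\Box H$ has maximum degree at most $4$. First I would record that, since a path or a cycle has maximum degree $2$, every vertex $(u,v)$ of $G\Box H$ has degree $d(u,v)=d_G(u)+d_H(v)\le 4$.

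Next, write the square as $A=\{w_1,w_2,w_3,w_4\}$ with $w_1=(u_i,v_k)$, $w_2=(u_i,v_l)$, $w_3=(u_j,v_l)$, $w_4=(u_j,v_k)$; from $u_i\sim u_j$ and $v_k\sim v_l$ these four distinct vertices induce the $4$-cycle $w_1w_2w_3w_4$ in $G\Box H$, so each $w_m$ has exactly two neighbours inside $A$, namely its two neighbours on this cycle. Assume, for contradiction, that $|S\cap A|\le 1$. Since deleting at most one vertex from a $4$-cycle always leaves some vertex both of whose cycle-neighbours survive, there is a vertex $w\in A\cap\overline{S}$ whose two neighbours in $A$ also lie in $\overline{S}$: if $S\cap A=\emptyset$ take $w=w_1$, and if $S\cap A=\{w_m\}$ take the vertex of $A$ diagonally opposite to $w_m$.

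Now I would apply the global offensive alliance condition (\ref{alliaceCondition}) at $w\in\overline{S}$. The two neighbours of $w$ on the square lie in $\overline{S}$, so $\delta_{\overline{S}}(w)\ge 2$ and hence $\delta_S(w)\ge\delta_{\overline{S}}(w)+1\ge 3$. But $\delta_S(w)+\delta_{\overline{S}}(w)=d(w)\le 4$ forces $\delta_S(w)\le 2$, a contradiction. Hence $|S\cap A|\ge 2$. There is no substantial obstacle in this argument; the only point to state carefully is the elementary observation that in a $4$-cycle the complement of a single vertex still contains a vertex both of whose neighbours lie outside $S$, which holds because opposite corners of a square are non-adjacent.
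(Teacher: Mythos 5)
Your proof is correct and follows essentially the same route as the paper's: both arguments observe that if at most one vertex of the square lies in $S$, then some vertex of the square outside $S$ has two neighbours outside $S$, which contradicts the fact that the maximum degree of $G\Box H$ is four together with the offensive alliance condition (you use the form (\ref{alliaceCondition}), the paper the equivalent form (\ref{alliaceCondition2})). Your write-up is merely a bit more explicit about why such a vertex exists (the diagonally opposite corner), which is a welcome clarification but not a different argument.
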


\begin{proof}
The result follows directly from the fact that if at least three of the vertices of the square $A=\{(u_i,v_k)$, $(u_i,v_l)$, $(u_j,v_k)$, $(u_j,v_l)\}$ do not belong to $S$, then (at least) for one of these three vertices, say $(u_i,v_k)$, it is satisfied that $\delta_{\overline{S}}(u_i,v_k)\ge 2$, which is a contradiction because $G\Box H$ has maximum degree four and by (\ref{alliaceCondition2}) we know that for every  $(u,v)\in \overline{S}$, it follows $\delta_{\overline{S}}(u,v)\le 1$.
\end{proof}

\begin{proposition}\label{ProdPnxPn}
Let $P_r\Box P_t$ be a grid graph.
\begin{enumerate}[{\rm (i)}]
\item If $r$ and $t$ are even, then $\gamma_o(P_r\Box P_t)= \frac{rt}{2}$.

\item If $r$ is even and $t$ is odd, then $\gamma_o(P_r\Box P_t)= \frac{r(t-1)}{2}+\left\lceil\frac{r}{3}\right\rceil$.

\item If $r$  and $t$ are odd, then $\frac{(r-1)(t-1)}{2}+\left\lceil\frac{r}{3}\right\rceil+\left\lceil\frac{t}{3}\right\rceil \le \gamma_o(P_r\Box P_t)\le  \frac{r(t-1)}{2}+\left\lceil\frac{r}{3}\right\rceil$.
\end{enumerate}
\end{proposition}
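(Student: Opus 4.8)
The plan is to prove the upper and lower bounds separately, in each case reducing to a counting argument built on Lemma~\ref{remark-4-2} (for lower bounds) or on an explicit pattern (for upper bounds). Throughout I identify $P_r\Box P_t$ with the $r\times t$ grid, writing $u_1,\dots,u_r$ and $v_1,\dots,v_t$ for the vertices of the two factors (consecutive subscripts adjacent); for $S\subseteq V(P_r\Box P_t)$ I let $V_j$ be the $v_j$-fiber, $S_j=P_{P_r}(S\cap V_j)$, and $\widetilde S_i=P_{P_t}(S\cap(u_i\text{-fiber}))$.

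For the upper bounds, part (i) is immediate from Corollary~\ref{bipartitos}, since $P_r$ and $P_t$ are connected bipartite graphs, so $\gamma_o(P_r\Box P_t)\le\frac{rt}{2}$. For the common upper bound of parts (ii) and (iii) (where $t$ is odd) I would write down an explicit global offensive alliance. Choose a minimum dominating set $D$ of $P_r$, so $|D|=\lceil r/3\rceil$; note $D$ contains no three consecutive vertices, since the middle one would be removable. Take $S=\bigl(D\times\{v_j: j\text{ odd}\}\bigr)\cup\bigl((V(P_r)\setminus D)\times\{v_j: j\text{ even}\}\bigr)$, so that $|S|=|D|\big\lceil\tfrac{t}{2}\big\rceil+(r-|D|)\big\lfloor\tfrac{t}{2}\big\rfloor=\frac{r(t-1)}{2}+\lceil r/3\rceil$. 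A short case check shows $S$ is a global offensive alliance: a vertex $(u_i,v_j)\notin S$ has all of its existing horizontal neighbours in $S$, since these belong to the opposite ``colour'' of column, and this already yields $\delta_S(u_i,v_j)\ge\delta_{\overline S}(u_i,v_j)+1$ unless $(u_i,v_j)$ has degree $4$ or lies in the boundary column $v_1$ or $v_t$; in those remaining situations the extra (vertical) neighbour needed lies in $S$ because $D$ is a dominating set of $P_r$ (when $u_i\notin D$) and contains no three consecutive vertices (when $u_i\in D$).

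For the lower bounds, fix a $\gamma_o(P_r\Box P_t)$-set $S$ and use two facts. First, by Lemma~\ref{remark-4-2} each square of the grid contains at least two vertices of $S$, so a family of pairwise disjoint squares forces $|S|$ on their union to be at least twice the number of squares. Second, if $v_j$ is the first or last column, then $S_j$ is a dominating set of $P_r$: a vertex of $V_j$ outside $S$ has exactly one neighbour outside $V_j$ and degree at most $3$, so condition (\ref{alliaceCondition}) forces at least one of its neighbours inside $V_j$ (both, for the two end-vertices, which have degree $2$) into $S$. Hence $|S_1|,|S_t|\ge\gamma(P_r)=\lceil r/3\rceil$ and, symmetrically, $|\widetilde S_1|,|\widetilde S_r|\ge\gamma(P_t)=\lceil t/3\rceil$. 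Now part (i) follows by tiling the whole grid with $\frac{rt}{4}$ disjoint $2\times2$ squares (pairing consecutive rows and consecutive columns), giving $|S|\ge\frac{rt}{2}$ and hence equality with the upper bound. Part (ii) follows by tiling columns $v_1,\dots,v_{t-1}$ (an even number of them) with $\frac{r(t-1)}{4}$ disjoint squares and adding the disjoint contribution $|S_t|\ge\lceil r/3\rceil$.

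For part (iii), with $r$ and $t$ odd, I would tile the $(r-1)\times(t-1)$ sub-grid on rows $u_1,\dots,u_{r-1}$ and columns $v_1,\dots,v_{t-1}$ with $\frac{(r-1)(t-1)}{4}$ disjoint squares (contributing at least $\frac{(r-1)(t-1)}{2}$) and then count the contribution of the ``L'' formed by the last row and the last column. When the corner $(u_r,v_t)\notin S$ this is clean: $S\cap V_t$ and $S\cap(u_r\text{-fiber})$ are disjoint, so the L contributes at least $\lceil r/3\rceil+\lceil t/3\rceil$ by the second fact above, and we are done. The case $(u_r,v_t)\in S$ is the part I expect to be the main obstacle: now the corner is double counted, costing one vertex, which must be recovered. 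I would do this by a local analysis near the corner: applying (\ref{alliaceCondition}) to the non-$S$ vertices of the last row and the last column forces certain prescribed vertices of the corner $2\times2$ block and of its neighbours inside the sub-grid into $S$, so that the square of the tiling sitting in that corner actually contains three (not merely two) vertices of $S$, exactly offsetting the deficit. Carrying out this bookkeeping cleanly — distinguishing the handful of subcases according to which extra corner vertices lie in $S$, and invoking the structure of minimum dominating sets of paths — is the delicate step of the proof.
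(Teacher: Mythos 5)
Parts (i) and (ii), and all the upper bounds, are correct and follow essentially the same route as the paper: the lower bounds come from tiling by disjoint squares via Lemma~\ref{remark-4-2} plus a count on the leftover boundary column, and the upper bounds come from an alternating-column construction. Your packaging of the construction as $D\times\{\text{odd columns}\}\cup (V(P_r)\setminus D)\times\{\text{even columns}\}$ for a minimum dominating set $D$ of $P_r$ is a clean uniform version of the paper's three explicit patterns (one per residue of $r$ modulo $3$), and your observation that no minimum dominating set of a path contains three consecutive vertices is exactly what is needed for the degree-$4$ vertices in even columns. Likewise, your argument that $S_1$ and $S_t$ project onto dominating sets of $P_r$ is an equivalent (and arguably tidier) form of the paper's ``degree-two vertices force both neighbours, and no three consecutive fiber vertices avoid $S$'' argument.

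Part (iii) is where your proposal is genuinely incomplete, and you have correctly identified the problem: after tiling $P_{r-1}\Box P_{t-1}$, the leftover L-shaped region consists of the last column and the last row, which overlap in the corner $(u_r,v_t)$, so when that corner lies in $S$ the two estimates $|S\cap V_t|\ge\lceil r/3\rceil$ and $|S\cap(u_r\text{-fiber})|\ge\lceil t/3\rceil$ only give $\lceil r/3\rceil+\lceil t/3\rceil-1$. Your proposed repair --- forcing a third vertex of $S$ into the $2\times 2$ tile $\{u_{r-2},u_{r-1}\}\times\{v_{t-2},v_{t-1}\}$ whenever $(u_r,v_t)\in S$ --- is not substantiated and does not look true: membership of the far corner in $S$ imposes no constraint of that kind on a tile two steps away, so this step would fail as stated. (For what it is worth, the paper itself disposes of (iii) with the phrase ``using a similar argument to the above case,'' so you have put your finger on a real subtlety that the published proof also glosses over.) A more promising way to close the gap is to exploit the freedom in the tiling: there are four choices of which $(r-1)\times(t-1)$ subgrid to tile, each making a different corner of the grid the overlap point of the L, so the deficit only arises if all four corner vertices of $P_r\Box P_t$ belong to $S$, and that case can then be attacked separately (e.g., a dominating set of $P_r$ obtained as a projection that is forced to contain both endpoints $u_1$ and $u_r$ exceeds $\gamma(P_r)$ unless $r\equiv 1,2\pmod 3$, so further case analysis is still required). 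As submitted, the lower bound in (iii) is not proved.
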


\begin{proof}
 Let $V_1=\{u_1,u_2,...,u_r\}$ and  $V_2=\{v_1,v_2,...,v_t\}$ be the sets of vertices of   $P_r$ and $P_t$, respectively. Here adjacent vertices have consecutive subscripts.  Let $S$ be a $\gamma_o(P_r\Box P_t)$-set.

Suppose $r$ and $t$ are even. Since there exists a vertex partition of $P_r\Box P_t$ into $\frac{rt}{4}$ disjoint squares, by Lemma \ref{remark-4-2} we have that
$\gamma_o(P_r\Box P_t)=|S|\ge \frac{rt}{2}.$ Moreover, by Proposition \ref{cotasuperior} we have $\gamma_o(P_r\Box P_t)=|S|\le \frac{rt}{2}.$ Therefore, (i) follows.\\

Now we suppose $r$ is even and  $t$ is odd. Since there exists a vertex partition of $P_r\Box P_{t-1}$ into $\frac{r(t-1)}{4}$ disjoint squares, by Lemma \ref{remark-4-2} we have that $\gamma_o(P_r\Box P_t)\ge \frac{r(t-1)}{2} +|S_t|$.

As above, let $V_t$ be the set of vertices of the $P_r$-fiber corresponding to the vertex $v_t$ of $P_t$ and let $S_t=S\cap V_t$. Notice that if a vertex of $V_t$, of degree two in $P_r\Box P_t$, does not belong to $S$, then  its two neighbors belong to $S$. Also, if three vertices of $V_r$ induce a path in $P_r\Box P_t$, then at least one of them belongs to $S$. Thus,  we have that $|S_t|\ge \left\lceil\frac{r}{3}\right\rceil$ and, as a consequence,
$\gamma_o(P_r\Box P_t)\ge \frac{r(t-1)}{2}+|S_t|\ge \frac{r(t-1)}{2}+\left\lceil\frac{r}{3}\right\rceil.$

On the other hand, let $W$ be the subset of vertices of $P_r\Box P_t$ taken in the following way:\\
\noindent
If $r\equiv 0 \pmod 3$,  then $W$ is composed of pairs $(u_i,v_j)$ with $i\in \{2,5,8,...,r-1\}$ and $j\in \{1,3,5,...,t-2,t\}$  as well as pairs $(u_i,v_j)$ with   $i\in \{1,3,4,6,7,...,r-3,r-2,r\}$ and $j\in \{2,4,6,...,t-3,t-1\}$.\\

Notice that in this case $$|W|=\frac{t-1}{2}\frac{r}{3}+\frac{t-1}{2}\left(r-\frac{r}{3}\right)+\frac{r}{3}=\frac{r(t-1)}{2}+\frac{r}{3}=\frac{r(t-1)}{2}+\left\lceil\frac{r}{3}\right\rceil.$$

\noindent
If $r\equiv 1 \pmod 3$,  then $W$ is composed of pairs $(u_i,v_j)$ with  $i\in \{1,4,7,...,r-6,r-3,r\}$ and $j\in \{1,3,5,...,t-2,t\}$ as well as pairs $(u_i,v_j)$ with $i\in \{2,3,5,6,...,r-2,r-1\}$ and $j\in \{2,4,6,...,t-3,t-1\}$.\\

If $r\equiv 2 \pmod 3$,  then $W$ is composed of pairs $(u_i,v_j)$ with $i\in \{1,4,7,...,r-7,r-4,r-1\}$ and $j\in \{1,3,5,...,t-2,t\}$  as well as pairs $(u_i,v_j)$ with $i\in \{2,3,5,6,...,r-3,r-2,r\}$ and $j\in \{2,4,6,...,t-3,t-1\}$.\\

Thus, in the above two cases ($r\equiv 1 \pmod 3$ or $r\equiv 2 \pmod 3$) we have
$$|W|=\frac{t-1}{2}\left\lceil\frac{r}{3}\right\rceil+\frac{t-1}{2}\left(r-\left\lceil\frac{r}{3}\right\rceil\right)+\left\lceil\frac{r}{3}\right\rceil=
\frac{r(t-1)}{2}+\left\lceil\frac{r}{3}\right\rceil.$$
So, in all the above cases we have that $|W|=\frac{r(t-1)}{2}+\left\lceil\frac{r}{3}\right\rceil$. Moreover, for every vertex $(u,v)\notin W$ we have that $\delta_{W}(u,v)=2\ge 1=\delta_{\overline{W}}(u,v)+1$, if $(u,v)$ has degree two, $\delta_{W}(u,v)=3\ge 1=\delta_{\overline{W}}(u,v)+1$, if $(u,v)$ has degree three,  and $\delta_{W}(u,v)=3\ge 2=\delta_{\overline{W}}(u,v)+1$, if $(u,v)$ has degree four. Thus, $W$ is a global offensive alliance and so, $\gamma_o(P_r\Box P_t)\le \frac{r(t-1)}{2}+\left\lceil\frac{r}{3}\right\rceil.$ Therefore, (ii) follows.\\

Finally, we suppose $r$ and  $t$ are  odd. Since there exists a vertex partition of $P_{r-1}\Box P_{t-1}$ into $\frac{(r-1)(t-1)}{4}$ disjoint squares,  using a similar argument to the above case we have that
$\gamma_o(P_r\Box P_t)\ge  \frac{(r-1)(t-1)}{2}+\left\lceil\frac{t}{3}\right\rceil+\left\lceil\frac{r}{3}\right\rceil.$
On the other hand, let $Q$ be the subset of vertices of $P_r\Box P_t$ taken in the following way:\\

\noindent
If $r\equiv 0 \pmod 3$, then $Q$ is composed of pairs $(u_i,v_j)$ with $i\in \{2,5,8,...,r-1\}$ and $j\in \{1,3,5,...,t-2,t\}$  as well as pairs $(u_i,v_j)$ with $i\in \{1,3,4,6,7,...,r-3,r-2,r\}$ and $j\in \{2,4,6,...,t-3,t-1\}$. \\

\noindent
If $r \equiv 1 \pmod 3$, then $Q$ is composed of pairs $(u_i,v_j)$ with $i\in \{1,4,7,...,r-6,r-3,r\}$ and $j\in \{1,3,5,...,t-2,t\}$ as well as pairs $(u_i,v_j)$ with $i\in \{2,3,5,6,...,r-2,r-1\}$ and $j\in \{2,4,6,...,t-3,t-1\}$. \\

 \noindent
If $r \equiv 2 \pmod 3$, then $Q$ is composed of pairs $(u_i,v_j)$ with $i\in \{1,4,7,...,r-7,r-4,r-1\}$ and $j\in \{1,3,5,...,t-2,t\}$  as well as pairs $(u_i,v_j)$ with $i\in \{2,3,5,6,...,r-3,r-2,r\}$ and $j\in \{2,4,6,...,t-3,t-1\}$. \\

Thus, in the above cases  we have
$$|Q|=\left\lceil\frac{r}{3}\right\rceil \left\lceil\frac{t}{2}\right\rceil+\left(r-\left\lceil\frac{r}{3}\right\rceil \right) \left\lfloor\frac{t}{2}\right\rfloor= \frac{r(t-1)}{2}+\left\lceil\frac{r}{3}\right\rceil .$$
 Moreover, for every vertex $(u,v)\notin Q$ we have that  $\delta_{Q}(u,v)=2\ge 1=\delta_{\overline{Q}}(u,v)+1$, if $(u,v)$ has degree two, $\delta_{Q}(u,v)=3\ge 1=\delta_{\overline{Q}}(u,v)+1$, if $(u,v)$ has degree three,  and $\delta_{Q}(u,v)=3\ge 2=\delta_{\overline{Q}}(u,v)+1$, if $(u,v)$ has degree four.  Thus, $Q$ is a global offensive alliance in $P_r\Box P_t$ and, as a consequence, $\gamma_o(P_r\Box P_t)\le \frac{r(t-1)}{2}+\left\lceil\frac{r}{3}\right\rceil $. Therefore, (iii) follows.
\end{proof}

\begin{corollary}
For any path graph $P_r$ and any path graph  $P_t$, $$\gamma_o(P_{r}\Box P_{t})\ge \gamma_o(P_r)\gamma_o(P_t).$$
\end{corollary}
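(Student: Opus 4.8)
The plan is to reduce everything to the explicit bounds of Proposition \ref{ProdPnxPn} together with the known value $\gamma_o(P_n)=\lfloor n/2\rfloor$. Since $\gamma_o(P_r)\gamma_o(P_t)=\lfloor r/2\rfloor\lfloor t/2\rfloor$, it suffices to check, in each of the three parity cases of Proposition \ref{ProdPnxPn}, that the lower bound obtained there dominates $\lfloor r/2\rfloor\lfloor t/2\rfloor$.

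First I would dispose of the all-even case: if $r$ and $t$ are both even, Proposition \ref{ProdPnxPn}(i) gives $\gamma_o(P_r\Box P_t)=\frac{rt}{2}$, whereas $\lfloor r/2\rfloor\lfloor t/2\rfloor=\frac{rt}{4}$, so the inequality holds with room to spare. If $r$ is even and $t$ is odd, Proposition \ref{ProdPnxPn}(ii) gives $\gamma_o(P_r\Box P_t)=\frac{r(t-1)}{2}+\lceil r/3\rceil\ge\frac{r(t-1)}{2}$, while $\lfloor r/2\rfloor\lfloor t/2\rfloor=\frac{r}{2}\cdot\frac{t-1}{2}=\frac{r(t-1)}{4}$, again at most half of the main term; the subcase $r$ odd, $t$ even is identical since $P_r\Box P_t\cong P_t\Box P_r$. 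Finally, if $r$ and $t$ are both odd, the lower bound in Proposition \ref{ProdPnxPn}(iii) is at least $\frac{(r-1)(t-1)}{2}$, and $\lfloor r/2\rfloor\lfloor t/2\rfloor=\frac{(r-1)(t-1)}{4}$, so we are done once more.

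Alternatively, one can bypass cases (ii) and (iii) altogether: whenever at least one factor has odd order, the corollary following Theorem \ref{ProdGxPn} already yields the strict inequality $\gamma_o(P_r\Box P_t)>\gamma_o(P_r)\gamma_o(P_t)$, the slack coming from the term $\lceil\delta/2\rceil\ge 1$ produced by the minimum degree of the other factor, so only the all-even case then needs Proposition \ref{ProdPnxPn}(i). Either way there is no genuine obstacle: the statement is a direct arithmetic consequence of results already proved, and the only thing to watch is that the three-way case split is exhaustive and that, in each case, the dominant term of the lower bound is at least twice $\lfloor r/2\rfloor\lfloor t/2\rfloor$.
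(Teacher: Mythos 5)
Your proposal is correct and follows exactly the derivation the paper intends: the corollary is stated as an immediate consequence of Proposition \ref{ProdPnxPn} together with $\gamma_o(P_n)=\lfloor n/2\rfloor$, and your case-by-case arithmetic check (with the symmetry $P_r\Box P_t\cong P_t\Box P_r$ covering the mixed-parity subcase) is the natural way to make that explicit. The alternative route via the corollary to Theorem \ref{ProdGxPn} for odd factors is also valid and is consistent with how the paper handles the odd-path case there.
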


\begin{proposition}\label{cilindros}
The global offensive alliance number of the cylinder graph $P_r\Box C_t$ is
$$\gamma_o(P_r\Box C_t)=\left\{\begin{array}{ll}
            \frac{rt}{2}, & \mbox{if $r$ is even,} \\
            & \\
            \frac{(r-1)t}{2}+\left\lceil\frac{t}{3}\right\rceil, & \mbox{if $r$ is odd.}
                                                                                   \end{array}
\right.$$
\end{proposition}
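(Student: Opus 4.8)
The plan is to treat the two parities of $r$ separately and, in each case, to prove a matching lower and upper bound. Throughout I would write $V(P_r)=\{u_1,\dots,u_r\}$ with consecutive subscripts adjacent, $V(C_t)=\{v_0,\dots,v_{t-1}\}$ with subscripts modulo $t$, let $S$ be a $\gamma_o(P_r\Box C_t)$-set, and for $j\in\{0,\dots,t-1\}$ let $V_j$ be the vertex set of the $P_r$-fiber over $v_j$ and $S_j=P_{P_r}(S\cap V_j)$. Since $P_r\Box C_t$ has maximum degree $4$, condition (\ref{alliaceCondition2}) forces $\delta_{\overline S}(x)\le 1$ for every $x\notin S$; this single fact powers both Lemma~\ref{remark-4-2} and the ``no three consecutive vertices outside $S$'' observations used below. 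A second structural remark used repeatedly: for every $j$ the subgraph induced by $V_j\cup V_{j+1}$ is isomorphic to $P_r\Box P_2$, and its squares are squares of $P_r\Box C_t$.

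For $r$ even I would argue as follows. Lower bound: $P_r\Box P_2$ decomposes into $r/2$ vertex-disjoint squares, so Lemma~\ref{remark-4-2} gives $|S_j|+|S_{j+1}|\ge r$; summing over $j$ cyclically yields $2|S|\ge rt$, hence $\gamma_o(P_r\Box C_t)\ge rt/2$. Upper bound: $P_r$ is partitioned by the global offensive alliance $\{u_2,u_4,\dots,u_r\}$ and the global strong offensive alliance $\{u_1,u_3,\dots,u_{r-1}\}$, each of size $r/2$, while $C_t$ is partitioned into a global offensive alliance of size $\lceil t/2\rceil$ and a global strong offensive alliance of size $\lfloor t/2\rfloor$ (as recalled in the proof of Proposition~\ref{CiclosporCiclos}); Theorem~\ref{partition-glob-strong} then yields $\gamma_o(P_r\Box C_t)\le \frac r2\lceil t/2\rceil+\frac r2\lfloor t/2\rfloor=rt/2$.

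For $r$ odd the lower bound combines two estimates. First, for each $j$ the subgraph on the vertices $(u_i,v_j),(u_i,v_{j+1})$ with $1\le i\le r-1$ is $P_{r-1}\Box P_2$, which (as $r-1$ is even) decomposes into $(r-1)/2$ squares; writing $s_j'=|S\cap\{(u_i,v_j):1\le i\le r-1\}|$ this gives $s_j'+s_{j+1}'\ge r-1$ and so $\sum_j s_j'\ge (r-1)t/2$. Second, the last row $R=\{(u_r,v_j):j\}$ is a cycle $C_t$ whose vertices have degree $3$ in $P_r\Box C_t$; if $(u_r,v_j)\notin S$ then $\delta_{\overline S}(u_r,v_j)\le 1$, so the two cycle-neighbours of $(u_r,v_j)$ cannot both be outside $S$, whence no three consecutive vertices of $R$ lie outside $S$, and a cyclic gap count gives $|S\cap R|\ge\lceil t/3\rceil$. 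Adding, $|S|\ge \frac{(r-1)t}2+\lceil t/3\rceil$. For the matching upper bound I would fix $D\subseteq\{0,\dots,t-1\}$ with $|D|=\lceil t/3\rceil$ that contains no three consecutive indices and whose complement contains no three consecutive indices --- e.g. the cyclic words $(SSD)^{t/3}$, $(SSD)^{(t-1)/3-1}SSDD$, $(SSD)^{(t-2)/3}SD$ for $t\equiv0,1,2\pmod 3$, where $D$ marks the ``dense'' columns --- and set
$$S=\{(u_i,v_j):j\notin D,\ i\text{ even},\ 1\le i\le r-1\}\cup\{(u_i,v_j):j\in D,\ i\text{ odd},\ 1\le i\le r\}.$$
Then $|S|=(t-|D|)\tfrac{r-1}2+|D|\tfrac{r+1}2=\tfrac{(r-1)t}2+\lceil t/3\rceil$, and one checks that $S$ is a global offensive alliance by a case analysis on the vertex $(u_i,v_j)\notin S$, split according to whether its column is in $D$ (which fixes the parity of $i$): the two $P_r$-direction neighbours always lie in $S$, and one $C_t$-direction neighbour lies in $S$ because of the chosen properties of $D$, which is enough for every degree $(u_i,v_j)$ can have.

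The main obstacle I expect is precisely this last verification. The degree-$3$ rows $i=1$, $i=r$ and the boundary row $i=r-1$, together with the places where the dense pattern abuts the sparse columns, must be handled by hand and do not fall to a single uniform inequality; and one must make sure that the two conditions imposed on $D$ (no three consecutive elements, no three consecutive non-elements) are at once \emph{exactly} strong enough to furnish the extra $S$-neighbour required by the degree-$4$ interior vertices and simultaneously satisfiable with only $\lceil t/3\rceil$ dense columns. Everything else --- the two lower bounds and the $r$ even upper bound --- is immediate once Lemma~\ref{remark-4-2} and the degree-$3$ observation on the last row are in hand.
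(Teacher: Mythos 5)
Your proof is correct, and its overall architecture (matching lower and upper bounds in each parity case, the upper bound for $r$ even via Theorem~\ref{partition-glob-strong}, and an explicit column pattern with $\left\lceil\frac{t}{3}\right\rceil$ ``dense'' columns for $r$ odd) coincides with the paper's. The genuine difference is in the lower bounds. For $r$ even the paper argues directly from the alliance condition that $|S_j\cup S_{j+1}|\ge r$ for consecutive fibers, whereas you get $|S_j|+|S_{j+1}|\ge r$ from a partition of each two-column slab into $r/2$ disjoint squares plus Lemma~\ref{remark-4-2}; these are minor variants. For $r$ odd the paper runs a global double count over all $(r-1)t$ squares, using that each vertex of $S$ lies in four squares except the degree-three vertices, which lie in two, and bounding the degree-three part of $S$ by $2\left\lceil\frac{t}{3}\right\rceil$ over \emph{both} boundary rows. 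You instead peel off only the top row $R=\{(u_r,v_j)\}$, tile the remaining $r-1$ rows of each two-column slab by $(r-1)/2$ disjoint squares, and add $|S\cap R|\ge\left\lceil\frac{t}{3}\right\rceil$; this is a cleaner, strictly additive decomposition that avoids the weighted incidence count and reaches the same bound. Your worry about the final verification of the construction is unfounded: the two conditions you impose on $D$ (no three cyclically consecutive indices in $D$ nor in its complement) are exactly what is needed to guarantee $\delta_{\overline S}(u_i,v_j)\le 1$ for every $(u_i,v_j)\notin S$, and since every vertex of $P_r\Box C_t$ has degree three or four, $\delta_{\overline S}\le 1$ is equivalent to the offensive condition; the rows $i=1$, $i=r-1$ and $i=r$ all fall under the same two cases (column in $D$ versus column not in $D$) with no extra work. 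Your explicit cyclic patterns do realize $|D|=\left\lceil\frac{t}{3}\right\rceil$ in all three residue classes, just as the paper's choice $D=\{j: j\equiv 0\ (\mathrm{mod}\ 3)\}$ does.
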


\begin{proof}
Let $S$ be a $\gamma_o(P_r\Box C_t)$-set. Let $\{u_1,u_2,...,u_{r}\}$  and $\{v_0,v_1,...,v_{t-1}\}$ be the sets of vertices of $P_r$ and $C_t$, respectively (Here adjacent vertices have consecutive subscripts. In the case of $C_t$, the subscripts are taken modulo $t$). We differentiate the following cases.\\

\noindent
Case 1: $r$ even.
 As above, let $V_j$ be the vertex set of the $v_j$-fiber and  let $S_j=P_{P_r}(S\cap V_j)$.  Let $(u_i,v_j)$ be a vertex not belonging to $S$. Since every vertex of $P_r \Box C_t$ has degree three or four and $S$ is a global offensive alliance, if $(u_{i+1},v_j)\not\in S$, then $(u_i,v_{j+1}), (u_{i+1},v_{j+1})\in S$, and if  $(u_i,v_{j+1})\not\in S$, then $(u_{i+1},v_j), (u_{i+1},v_{j+1})\in S$. Thus,  for every $j\in \{0,1,...,t-1\}$, $|S_j\cup S_{j+1}|\ge r$.  Hence
$$2|S|=\sum_{j=0}^{{t-1}}| S_j\cup S_{j+1}|\ge r t.$$
Therefore, we have that $\gamma_o(P_r\Box C_t)\ge \frac{rt}{2}$.
Since every cycle $C_n$ (every path $P_n$) can be partitioned into a global strong offensive alliance of cardinality $\left\lfloor\frac{n}{2}\right\rfloor$ and a  global offensive alliance of cardinality $\left\lceil\frac{n}{2}\right\rceil$, by Theorem \ref{partition-glob-strong} we have  $\gamma_o(P_{r}\Box C_{t})\le \frac{rt}{2}.$

\noindent
Case 2:  $r$ odd.
The number of squares of $P_r\Box C_t$ is $(r-1)t$. By Lemma \ref{remark-4-2} we know that each square of $P_r\Box C_t$ contains at least two vertices belonging to $S$, moreover,  each vertex of $S$ belongs to four different squares, except the vertices of degree three which only belong to two different squares. So, we have $2(r-1)t\le 4(|S|-|S'|)+2|S'|$, where $S'=\{(u,v)\in S\,:\,d(u,v)=3\}$. Note also that if three vertices of degree three induce a path in $P_r\Box C_t$, then at least one of them belongs to $S$.  Thus, $|S'|\ge 2\left\lceil\frac{t}{3}\right\rceil$. Hence,
$|S|\ge \frac{(r-1)t}{2}+\left\lceil\frac{t}{3}\right\rceil.$

Now, let $Y$ be the subset of vertices of $P_r\Box C_t$ which is formed by pairs $(u_i,v_j)$ with
 $i\in \{1, 3,5,...,r\}$ and $j\in \{k\,:\,0\le k\le t-1,\, k\equiv 0\; (\mbox{mod}\; 3)\}$ as well as pairs $(u_i,v_j)$ with
 $i\in \{2,4,...,r-1\}$ and $j\in \{k\,:\,0\le k\le t-1,\, k\not\equiv 0\; (\mbox{mod}\; 3)\}$.

Then, clearly $|Y|=\left\lceil\frac{r}{2}\right\rceil \left\lceil\frac{t}{3}\right\rceil+ \left\lfloor\frac{r}{2}\right\rfloor (t-\left\lceil\frac{t}{3}\right\rceil)=\frac{(r-1)t}{2}+\left\lceil\frac{t}{3}\right\rceil$. Now, since for every  $(u,v)\notin Y$ we have that $\delta_{\overline{Y}}(u,v)= 0$ or $\delta_{\overline{Y}}(u,v)= 1$, we conclude that  $Y$ is a global offensive alliance in $P_r\Box C_t$ and, as a consequence, $\gamma_o(P_r\Box C_t)\le |Y|=\frac{(r-1)t}{2}+\left\lceil\frac{t}{3}\right\rceil$.
Therefore, the proof is complete.



\end{proof}

\begin{corollary}
For any path graph $P_r$ and any cycle graph $C_t$,
$$\gamma_o(P_{r}\Box C_{t})\ge \gamma_o(P_r)\gamma_o(C_t).$$
\end{corollary}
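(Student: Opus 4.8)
The plan is to read off $\gamma_o(P_r\Box C_t)$ from Proposition~\ref{cilindros} and compare it with the product $\gamma_o(P_r)\gamma_o(C_t)$, for which we first need the values of the two factors. We already know $\gamma_o(P_r)=\left\lfloor r/2\right\rfloor$. For the cycle, since $C_t$ is $2$-regular, condition~(\ref{alliaceCondition2}) forces $\delta_{\overline{S}}(v)=0$ for every $v\in\overline{S}$; hence a set is a global offensive alliance in $C_t$ precisely when its complement is independent, so $\gamma_o(C_t)=t-\alpha(C_t)=\left\lceil t/2\right\rceil$. Thus it suffices to prove $\gamma_o(P_r\Box C_t)\ge\left\lfloor r/2\right\rfloor\left\lceil t/2\right\rceil$.

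The key step is the observation that, in both cases of Proposition~\ref{cilindros}, one has the (slightly coarser) bound $\gamma_o(P_r\Box C_t)\ge\left\lfloor r/2\right\rfloor t$: if $r$ is even this is the equality $rt/2=\left\lfloor r/2\right\rfloor t$, and if $r$ is odd it follows by dropping the nonnegative term $\left\lceil t/3\right\rceil$, since $\frac{(r-1)t}{2}=\left\lfloor r/2\right\rfloor t$. Combining this with the trivial inequality $t\ge\left\lceil t/2\right\rceil$ gives
$$\gamma_o(P_r\Box C_t)\ge\left\lfloor r/2\right\rfloor t\ge\left\lfloor r/2\right\rfloor\left\lceil t/2\right\rceil=\gamma_o(P_r)\gamma_o(C_t),$$
as desired.

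I do not expect any real obstacle here: the whole argument is a short arithmetic comparison once Proposition~\ref{cilindros} is in hand, and the crude bound $\gamma_o(P_r\Box C_t)\ge\left\lfloor r/2\right\rfloor t$ already comfortably absorbs the extra summand $\left\lceil t/3\right\rceil$ appearing when $r$ is odd. The only point worth a line of justification is the identification $\gamma_o(C_t)=\left\lceil t/2\right\rceil$, which is immediate from $2$-regularity as indicated above.
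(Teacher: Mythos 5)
Your argument is correct and is essentially the paper's (implicit) proof: the corollary is meant to follow directly from Proposition~\ref{cilindros} by comparing the closed formulas with $\gamma_o(P_r)\gamma_o(C_t)=\left\lfloor r/2\right\rfloor\left\lceil t/2\right\rceil$, exactly as you do. Your justification of $\gamma_o(C_t)=\left\lceil t/2\right\rceil$ via $2$-regularity and the crude bound $\gamma_o(P_r\Box C_t)\ge\left\lfloor r/2\right\rfloor t$ are both sound.
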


It was shown in  \cite{cota-global} that the global offensive alliance number of a connected graph $G$ of order $n$ is bounded by
 \begin{equation}\label{lemma-cota-glob}
 \gamma_o(G)\ge \left\lceil\frac{n}{\lambda}\left\lceil\frac{\delta+1}{2}\right\rceil  \right\rceil,
\end{equation}
where $\lambda$ is the Laplacian spectral radius\footnote{The Laplacian spectral radius of a graph $G$ is the largest laplacian eigenvalue of $G$. More information about Laplacian eigenvalues of a graph can be found in \cite{laplacian-mohar}.}  of $G$ and  $\delta$ its minimum degree. This bound will be useful to prove the following result.

\begin{proposition}\label{CompletoxCompleto}
Let $r$ and $t$ be two positive integers. If $r,t$ have the same parity, then
$$\gamma_o(K_{r}\Box K_{t})= \left\lceil\frac{rt}{2}\right\rceil.$$
If $r$ and $t$ have different parity, then
$$\left\lceil\frac{rt(r+t-1)}{2(r+t)}\right\rceil   \le \gamma_o(K_{r}\Box K_{t})\le  \left\lceil\frac{rt}{2}\right\rceil.$$
\end{proposition}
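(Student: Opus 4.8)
The plan is to prove the two inequalities separately: an upper bound $\gamma_o(K_r\Box K_t)\le\left\lceil\frac{rt}{2}\right\rceil$ holding regardless of parity, derived from Theorem \ref{partition-glob-strong}, and a lower bound derived from the spectral bound (\ref{lemma-cota-glob}); the lower bound will coincide with the upper bound exactly when $r$ and $t$ have the same parity and be slightly weaker otherwise. Throughout I may assume $r,t\ge 2$, since $K_1\Box K_n=K_n$ has $\gamma_o(K_n)=\left\lceil\frac{n}{2}\right\rceil$, which is readily checked to agree with both displayed statements; this also keeps the connectivity and the (nonempty) alliance partitions used below from being vacuous.

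For the upper bound, recall, as already used in the proof of Proposition \ref{CompletosporCiclos}, that in $K_n$ a set of size $s$ is a global offensive alliance if and only if $2s\ge n$ and a global strong offensive alliance if and only if $2s\ge n-1$; hence $V(K_n)$ admits a partition $\{A,B\}$ with $|A|=\left\lceil\frac{n}{2}\right\rceil$ and $|B|=\left\lfloor\frac{n}{2}\right\rfloor$, where $A$ is a global offensive alliance and $B$ a global strong offensive alliance. Applying Theorem \ref{partition-glob-strong} to such partitions of $K_r$ and of $K_t$ gives $\gamma_o(K_r\Box K_t)\le\left\lceil\frac{r}{2}\right\rceil\left\lceil\frac{t}{2}\right\rceil+\left\lfloor\frac{r}{2}\right\rfloor\left\lfloor\frac{t}{2}\right\rfloor$. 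A short parity check shows this quantity equals $\frac{rt}{2}$ when $r$ or $t$ is even and $\frac{rt+1}{2}$ when both $r,t$ are odd; that is, it equals $\left\lceil\frac{rt}{2}\right\rceil$ in every case, which is the claimed upper bound.

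For the lower bound I would apply (\ref{lemma-cota-glob}) to the connected graph $G=K_r\Box K_t$. This graph is $(r+t-2)$-regular of order $rt$, so $\delta=r+t-2$, and its Laplacian spectral radius is $\lambda=r+t$: the Laplacian eigenvalues of $K_n$ are $0$ and $n$ (the latter with multiplicity $n-1$), and the Laplacian spectrum of a Cartesian product graph consists of all pairwise sums of Laplacian eigenvalues of the factors (see \cite{laplacian-mohar}), so the Laplacian eigenvalues of $K_r\Box K_t$ are $0,r,t,r+t$. Substituting into (\ref{lemma-cota-glob}) yields $\gamma_o(K_r\Box K_t)\ge\left\lceil\frac{rt}{r+t}\left\lceil\frac{r+t-1}{2}\right\rceil\right\rceil$. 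If $r$ and $t$ have the same parity, then $r+t$ is even, so $\left\lceil\frac{r+t-1}{2}\right\rceil=\frac{r+t}{2}$ and the bound collapses to $\left\lceil\frac{rt}{2}\right\rceil$, matching the upper bound and giving equality. If $r$ and $t$ have different parity, then $r+t$ is odd, so $\left\lceil\frac{r+t-1}{2}\right\rceil=\frac{r+t-1}{2}$ and the bound becomes $\left\lceil\frac{rt(r+t-1)}{2(r+t)}\right\rceil$, which together with the upper bound yields the stated interval.

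The only genuinely non-routine point is the determination $\lambda=r+t$ of the Laplacian spectral radius; everything else is parity bookkeeping and a direct appeal to Theorem \ref{partition-glob-strong} and to the bound (\ref{lemma-cota-glob}) already available in the paper. I would also verify the degenerate cases $r=1$ or $t=1$ at the outset, as noted, so that the hypotheses of those results are genuinely satisfied.
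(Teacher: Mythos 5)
Your proof is correct and takes essentially the same route as the paper's: the upper bound comes from Theorem \ref{partition-glob-strong} applied to the partition of each $K_n$ into a global offensive alliance of size $\left\lceil\frac{n}{2}\right\rceil$ and a global strong offensive alliance of size $\left\lfloor\frac{n}{2}\right\rfloor$, and the lower bound from the spectral inequality (\ref{lemma-cota-glob}) with $n=rt$, $\delta=r+t-2$ and $\lambda=r+t$, split by the parity of $r+t$. The only difference is that you supply details the paper leaves implicit (the computation of the Laplacian spectral radius via the product rule for Laplacian spectra, the explicit parity check of $\left\lceil\frac{r}{2}\right\rceil\left\lceil\frac{t}{2}\right\rceil+\left\lfloor\frac{r}{2}\right\rfloor\left\lfloor\frac{t}{2}\right\rfloor$, and the degenerate case $r=1$ or $t=1$), all of which check out.
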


\begin{proof}
 Since every complete graph $K_n$ can be partitioned into a global strong offensive alliance of cardinality $\left\lfloor\frac{n}{2}\right\rfloor$ and a  global offensive alliance of cardinality $\left\lceil\frac{n}{2}\right\rceil$, by Theorem \ref{partition-glob-strong} we have  $\gamma_o(K_{r}\Box K_{t})\le \left\lceil\frac{rt}{2}\right\rceil.$

On the other hand, in order to apply (\ref{lemma-cota-glob}) to  $K_{r}\Box K_{t}$, we recall that in this case we have order $rt$, degree $r+t-2$ and Laplacian spectral radius $\lambda=r+t$. So, if $r$ and $t$ have the same parity, then (\ref{lemma-cota-glob}) leads to $\gamma_o(K_{r}\Box K_{t})\ge  \left\lceil\frac{rt}{2}\right\rceil,$  and if $r$ and $t$ have different parity, then (\ref{lemma-cota-glob}) leads to $\gamma_o(K_{r}\Box K_{t})\ge \left\lceil\frac{rt(r+t-1)}{2(r+t)}\right\rceil$.
The proof is complete.
\end{proof}

\begin{corollary}
For any complete graphs
$$\gamma_o(K_r\Box K_t)\ge \gamma_o(K_r)\gamma_o(K_t).$$
\end{corollary}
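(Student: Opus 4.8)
The plan is to derive the corollary directly from Proposition~\ref{CompletoxCompleto} together with the elementary identity $\gamma_o(K_n)=\left\lceil\frac{n}{2}\right\rceil$ for every $n\ge 1$. The upper bound $\gamma_o(K_n)\le\left\lceil\frac{n}{2}\right\rceil$ is the partition of $K_n$ into a global strong offensive alliance of size $\left\lfloor\frac{n}{2}\right\rfloor$ and a global offensive alliance of size $\left\lceil\frac{n}{2}\right\rceil$ already used in the excerpt, while the matching lower bound is immediate from (\ref{alliaceCondition}): a vertex outside a global offensive alliance $S$ of $K_n$ has exactly $|S|$ neighbours inside $S$ and $n-|S|-1$ outside, forcing $2|S|\ge n$. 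Hence the claim reduces to showing $\gamma_o(K_r\Box K_t)\ge\left\lceil\frac{r}{2}\right\rceil\left\lceil\frac{t}{2}\right\rceil$, and I would split according to the two cases of Proposition~\ref{CompletoxCompleto}.

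In the equal-parity case Proposition~\ref{CompletoxCompleto} yields the exact value $\gamma_o(K_r\Box K_t)=\left\lceil\frac{rt}{2}\right\rceil$, so it suffices to verify $\left\lceil\frac{rt}{2}\right\rceil\ge\left\lceil\frac{r}{2}\right\rceil\left\lceil\frac{t}{2}\right\rceil$. If $r$ and $t$ are both even this is the trivial $\frac{rt}{2}\ge\frac{rt}{4}$. If both are odd, then $\left\lceil\frac{rt}{2}\right\rceil=\frac{rt+1}{2}$ and $\left\lceil\frac{r}{2}\right\rceil\left\lceil\frac{t}{2}\right\rceil=\frac{(r+1)(t+1)}{4}$, and clearing denominators turns the inequality into $(r-1)(t-1)\ge 0$, which holds.

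In the different-parity case, assume without loss of generality that $r$ is even and $t$ is odd, so the target is $\left\lceil\frac{r}{2}\right\rceil\left\lceil\frac{t}{2}\right\rceil=\frac{r(t+1)}{4}$ and Proposition~\ref{CompletoxCompleto} gives $\gamma_o(K_r\Box K_t)\ge\left\lceil\frac{rt(r+t-1)}{2(r+t)}\right\rceil$. I would compare the raw bound with the target by computing $\frac{rt(r+t-1)}{2(r+t)}-\frac{r(t+1)}{4}=\frac{r\,[\,r(t-1)+t(t-3)\,]}{4(r+t)}$. For $t\ge 3$ both summands inside the bracket are nonnegative, so the raw bound already dominates $\frac{r(t+1)}{4}$ and the conclusion follows after taking ceilings. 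The only remaining subcase is $t=1$, i.e.\ $K_r\Box K_1\cong K_r$, where the raw bound equals $\frac{r^2}{2(r+1)}$; since $r^2<r(r+1)$ and $\frac{r^2}{2(r+1)}>\frac{r}{2}-\frac12$, this quantity lies strictly between $\frac{r}{2}-\frac12$ and $\frac{r}{2}$, so its ceiling is exactly $\frac{r}{2}=\frac{r(t+1)}{4}$, which closes the argument.

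The main obstacle I anticipate is precisely this last edge case: for small $t$ of opposite parity the Laplacian-spectral lower bound of Proposition~\ref{CompletoxCompleto} sits slightly below $\left\lceil\frac{r}{2}\right\rceil\left\lceil\frac{t}{2}\right\rceil$, and one must exploit the rounding up to recover the inequality; the rest is routine arithmetic with ceilings.
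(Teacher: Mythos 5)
Your proposal is correct and follows exactly the route the paper intends: the corollary is stated as an immediate consequence of Proposition~\ref{CompletoxCompleto} together with $\gamma_o(K_n)=\left\lceil\frac{n}{2}\right\rceil$, and your computation $\frac{rt(r+t-1)}{2(r+t)}-\frac{r(t+1)}{4}=\frac{r\left[r(t-1)+t(t-3)\right]}{4(r+t)}$ plus the $t=1$ ceiling analysis correctly fills in the arithmetic the paper leaves implicit. No gaps.
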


\section*{Acknowledgements}
The authors would like to thank the anonymous reviewers
for their valuable comments and suggestions to improve the
quality of the paper.


\end{document}